\newtheorem{theorem}{Theorem}
\newtheorem{lemma}{Lemma}
\newcommand{\Tr}{{\rm Tr\,}}
\theoremstyle{definition}
\newtheorem{definition}{Definition}
\newtheorem{assumption}{Assumption}
\begin{document}
\title[Stochastic Perron's method]{Stochastic Perron's method for optimal control problems with state constraints}

\author{Dmitry B. Rokhlin}

\address{D.B. Rokhlin,
Institute of Mathematics, Mechanics and Computer Sciences,
              Southern Federal University,
Mil'chakova str., 8a, 344090, Rostov-on-Don, Russia}
\email{rokhlin@math.rsu.ru}

\begin{abstract} We apply the stochastic Perron method of Bayraktar and S\^irbu to a general infinite horizon optimal control problem, where the state $X$ is a controlled diffusion process, and the state constraint is described by a closed set. We prove that the value function $v$ is bounded from below (resp., from above) by a viscosity supersolution (resp., subsolution) of the related state constrained problem for the Hamilton-Jacobi-Bellman equation. In the case of a smooth domain, under some additional assumptions, these estimates allow to identify $v$ with a unique continuous constrained viscosity solution of this equation.
\end{abstract}
\subjclass[2010]{93E20, 49L25, 60H30}
\keywords{Stochastic Perron's method, state constraints, viscosity solution, comparison result}

\maketitle

\section{Introduction and the main result}
\label{sec:1}
The aim of the paper is to extend the scope of applications of the stochastic Perron method, developed by Bayraktar and S\^irbu. This method allows to characterise the value function of a controlled diffusion problem as a viscosity solution of the corresponding Hamilton-Jacobi-Bellman (HJB) equation, bypassing the dynamic programming principle. Instead it requires a comparison result, implying the uniqueness of a viscosity solution of the HJB equation. Previously this method was applied to linear parabolic equations \cite{BaySir12}, stochastic differential games \cite{BaySir14,Sir13,Sir14}, regular \cite{BaySir13,Rok13} and singular control problems \cite{BayZha14}.

The method involves the construction of two families $\mathcal V_-$, $\mathcal V_+$ of functions, bounding the value function from below and above
\[ u\le v\le w,\ \ \ u\in \mathcal V_-,\ \ w\in\mathcal V_+. \]
Elements of $\mathcal V_-$, $\mathcal V_+$ are called stochastic sub- and supersolutions. By the superposition with the state process, $u$ and $w$ generate sub- and supermartingale-like processes. Similarly to the classical Perron method \cite[Sections 2.8, 6.3]{GilTru01}, the set $\mathcal V_-$ (resp., $\mathcal V_+$) is directed upward (resp., downward) with respect to the pointwise maximum (resp., minimum) operation. The essence of the method is to prove that the functions
\[ u_-(x)=\sup_{u\in\mathcal V_-} u(x),\ \ \ w_+(x)=\inf_{w\in\mathcal V_+} w(x) \]
are respectively viscosity super- and subsolutions of the related HJB equation. If a comparison result, providing the inequality $u_-\ge w_+$, holds true, it follows that $u_-=v=w_+$ is a unique (continuous) viscosity solution. This construction differs from Perron's method of \cite{Ish87}, which is not linked to the value function.

In the present paper we consider the stochastic control problem with state constraints in the form of \cite{Kats94}. In contrast to \cite{LasLio89}, where the drift is not assumed to be bounded, and the value function is singular near the boundary, in \cite{Kats94} the problem is "regular". To achieve the regularity it is assumed that the diffusion coefficient depends on the control and degenerates at the boundary. The same problem was considered in \cite{IshLor02,BucGorQui11}. It was proved that under appropriate assumptions the value function $v$ is a unique continuous constrained viscosity solution of the HJB equation. (The term "constrained" means, in particular, that $v$ satisfies special boundary conditions, which in the deterministic situation were introduced in \cite{Son86}.) Roughly speaking, it is enough to assume that for each boundary point there exists a control, which kills the diffusion and directs the drift strictly inside the domain.

An application of the stochastic Perron method to state constrained problems seems rather interesting, since, as it is mentioned in \cite{Kats94}, a direct proof of the dynamic programming principle is not available due to a complicated structure of admissible control processes, retaining a phase trajectory in a predetermined domain. Different penalization and approximation procedures were used instead in \cite{Kats94,IshLor02,BucGorQui11,BouNut12}.

We turn to the precise statement of our main result (Theorem \ref{th:1}). Let $\Omega$ be the space $C([0,\infty),\mathbb R^m)$ of continuous $\mathbb R^m$-valued functions, endowed with the $\sigma$-algebra $\mathscr F^\circ$ of cylindrical sets, and let $\mathsf P$ be the Wiener measure on $\mathscr F^\circ$. So, the canonical process $W_s(\omega)=\omega(s)$ is the standard $m$-dimensional Brownian motion under $\mathsf P$. Denote by $\mathbb F^\circ = (\mathscr F_t^\circ)_{t\ge 0
}$ the natural filtration of $W$, and let $\mathbb F = (\mathscr F_t)_{t\ge 0
}$ be the correspondent minimal augmented filtration. The extension of the Wiener measure to the completion $\mathscr F$ of $\mathscr F^\circ$ is still denoted by $\mathsf P$.

Let $\alpha$ be an $\mathbb F$-progressively measurable stochastic process with values in a compact set $A\subset\mathbb R^k$, $0\in A$. Consider the system of stochastic differential equations
\begin{equation} \label{eq:1.1}
 dX_t=b(X_t,\alpha_t) dt+\sigma(X_t,\alpha_t) dW_t,\ \ X_0=x.
\end{equation}
We assume that the drift vector $b:\mathbb R^d\times A\mapsto\mathbb R^d$ and the diffusion matrix $\sigma:\mathbb R^d\times A\mapsto\mathbb R^d\times\mathbb R^m$ are continuous and satisfy the Lipschitz condition
\[ |b(x,a)-b(y,a)|+|\sigma(x,a)-\sigma(y,a)| \le K |x-y| \]
with some constant $K$ independent of $x$, $y$, $a$. Note, that the linear growth condition
\[ |b(x,a)|+|\sigma(x,a)| \le K' (1+|x|) \]
follows from the continuity of $b$, $\sigma$ and compactness of $A$. Thus, there exist a unique $\mathbb F$-adapted strong solution  $X^{x,\alpha}$ of (\ref{eq:1.1}) on $[0,\infty)$: see \cite[Chapter 2, Sect. 5]{Kry80}.

Let $G\subset\mathbb R^d$ be a closed set with the boundary $\partial G$ and nonempty interior $G^\circ$. It will be convenient to assume that $0\in G^\circ$. Denote by $\mathscr A(x)$, $x\in G$ the set of $\mathbb F$-progressively measurable control processes $\alpha$ with values in $A$ and such that $X_t^{x,\alpha}\in G$, $t\ge 0$ a.s. Elements of $\mathscr A(x)$ are called \emph{admissible controls} for the initial condition $x$. The cost functional $J$ and the value function $v$ are defined as follows
\begin{equation} \label{eq:1.2}
J(x,\alpha)=\mathsf E\int_0^\infty e^{-\beta s} f(X_s^{x,\alpha},\alpha_s)\,ds, \ \ \ v(x)=\inf_{\alpha\in\mathscr A(x)} J(x,\alpha),
\end{equation}
where $f:G\times A\mapsto\mathbb R$ is a bounded continuous function.

We assume that  for any initial condition $x\in G$ there exists an admissible control: $\mathscr A(x)\neq\emptyset$. In this case the set $G$ is called \emph{viable}. A necessary condition for the validity of this property is given in \cite{BarJen02} (Theorem 1). Let
\[ \mathscr N_G^2(x)=\left\{(p,Y)\in\mathbb R^d\times\mathbb S^d:\liminf_{G\ni y\to x}\left(\frac{p\cdot(y-x)}{|y-x|^2}+\frac{1}{2}\frac{Y(y-x)\cdot(y-x)}{|y-x|^2}\right)\ge 0\right\} \]
be the second order normal cone. Here $\mathbb S^d$ is the set of symmetric $d\times d$ matrices. If the set $G$ is viable then for all $x\in\partial G$, $(p,Y)\in\mathscr N_G^2(x)$ there exist $a\in A$ such that
\begin{equation} \label{eq:1.3}
 p\cdot b(x,a)+\frac{1}{2}\Tr(\sigma(x,a)\sigma^T(x,a)Y)\ge 0.
\end{equation}
See \cite[Section 3]{BarJen02} for more concrete forms of this condition.

We impose a slightly stronger requirement. For any function $\psi:\mathbb R^d\mapsto A$ put
\begin{equation} \label{eq:1.4}
b_\psi(x)=b(x,\psi(x)),\ \ \ \sigma_\psi(x)=\sigma(x,\psi(x)).
\end{equation}
\begin{assumption} \label{as:1}
There exist a Borel measurable function $\psi:\mathbb R^d\mapsto A$ such that $b_\psi$, $\sigma_\psi$ are globally Lipschitz continuous and
\[ p\cdot b_\psi(x)+\frac{1}{2}\Tr(\sigma_\psi(x)\sigma^T_\psi(x) Y)\ge 0, \ \ x\in\partial G, \ (p,Y)\in\mathscr N_G^2(x).\]
\end{assumption}
Under this assumption there exist a unique strong solution of the equation
\begin{equation} \label{eq:1.5}
 dX_t=b_\psi(X_t) dt+\sigma_\psi(X_t) dW_t,\ \ X_0=x
\end{equation}
and $X_t\in G$, $t\ge 0$ a.s.: see \cite[Theorem 3.1]{BarGoa99}. The correspondent control process $\alpha_t=\psi(X_t)$ is admissible for $x$. Hence, $\mathscr A(x)\neq\emptyset$, $x\in G$.

Consider the Bellman operator
\[ F(x,r,p,Y)=\sup_{a\in A}\left(\beta r-f(x,a)-b(x,a)\cdot p-\frac{1}{2}\Tr(\sigma(x,a)\sigma^T(x,a)Y)\right), \]
defined on $\mathbb R\times\mathbb R\times\mathbb R^d\times\mathbb S^d$.
Recall that a bounded upper semicontinuous (usc) function $u$ is called a \emph{viscosity subsolution} of the equation
\begin{equation} \label{eq:1.6}
 F(x,u,D u, D^2 u)=0
\end{equation}
on a set $E\subset\mathbb R^d$ if for any $\varphi\in C^2(\mathbb R^d)$ and for any local maximum point $x_0$ of $u-\varphi$ on $E$ the inequality
\[ F(x_0,u(x_0),D\varphi(x_0),D^2\varphi(x_0))\le 0 \]
holds true. In the same way, a bounded lower semicontinuous (lsc) function $w$ is called a \emph{viscosity supersolution} of (\ref{eq:1.6}) on $E$ if for any $\varphi\in C^2(\mathbb R^d)$ and for any local minimum point $x_0$ of $w-\varphi$ on $E$ we have the inequality
\[ F(x_0,w(x_0),D\varphi(x_0),D^2\varphi(x_0))\ge 0. \]

In these definitions one can assume that the maximum (resp., minimum) point $x_0$ is strict and $\varphi(x_0)=u(x_0)$ (resp., $\varphi(x_0)=w(x_0)$).

It is convenient to introduce the \emph{state constrained} problem
\begin{equation} \label{eq:1.7}
 \left\{\begin{array}{l l}
    F(x,u,Du,D^2 u)\le 0 & \text{on}\quad G^\circ,\\
    F(x,u,Du,D^2 u)\ge 0 & \text{on}\quad G.
  \end{array} \right.
\end{equation}
We say that a bounded usc (resp., lsc) function $u$, defined on $G$, is viscosity subsolution (resp., supersolution) of the state constrained problem (\ref{eq:1.7}) if $F(x,u,Du,D^2 u)\le 0$ on $G^\circ$ (resp.,  $F(x,u,Du,D^2 u)\ge 0$ on $G$) in the viscosity sense. A bounded function $u$ is called a viscosity solution of (\ref{eq:1.7}) (or a \emph{constrained viscosity solution}), if its upper semicontinuous envelope $u^*$ is a viscosity subsolution, and its lower semicontinuous envelope $u_*$ is a viscosity supersolution of (\ref{eq:1.7}).

Denote by $\Gamma$ the set of points $x\in\partial G$ such that for some $\alpha\in\mathscr A(x)$ the solution $X^{x,\alpha}$ of (\ref{eq:1.1}) immediately enters $G^\circ$ with probability $1$:
\[ \mathsf P(\inf\{t>0:X_t^{x,\alpha}\in G^\circ\}=0)=1. \]

\begin{theorem} \label{th:1}
There exist a viscosity subsolution $w_+$ and a viscosity supersolution $u_-$ of the state constrained problem (\ref{eq:1.7}) such that
\[ u_-\le v\ \quad\text{on}\quad G;\quad v\le w_+\quad \text{on}\quad G^\circ, \]
and $v(x)\le\limsup_{G^\circ\ni y\to x} w_+(y)$, $x\in\Gamma.$
\end{theorem}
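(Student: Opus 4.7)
The plan is to carry out the stochastic Perron program of Bayraktar--S\^irbu in the state-constrained setting. I would first introduce two classes. The class $\mathcal V_-$ of \emph{stochastic subsolutions} consists of bounded continuous functions $u\colon G\to\mathbb R$ such that, for every $x\in G$ and every $\alpha\in\mathscr A(x)$, the process
\[
M_t^{u,\alpha}=e^{-\beta t}u(X_t^{x,\alpha})+\int_0^t e^{-\beta s}f(X_s^{x,\alpha},\alpha_s)\,ds
\]
is an $\mathbb F$-submartingale. Dually, the class $\mathcal V_+$ of \emph{stochastic supersolutions} consists of bounded continuous $w\colon G\to\mathbb R$ for which, given $x\in G$ and any $\mathbb F$-stopping time $\rho$, there exists $\alpha\in\mathscr A(x)$ (possibly up to an $\varepsilon>0$) making $M^{w,\alpha}_{\cdot\wedge\rho}$ a supermartingale. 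Non-emptiness is clear: the constants $\mp\|f\|_\infty/\beta$ belong to $\mathcal V_\mp$, and Assumption~\ref{as:1} supplies the Lipschitz feedback $\psi$ to be used as the ``safe'' admissible control in the supersolution construction.

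Next I would establish the familiar structural properties: $\mathcal V_-$ is upward directed and $\mathcal V_+$ is downward directed, by splitting at a suitable stopping time and applying optional sampling on the complementary events $\{u_1(X_\tau)\ge u_2(X_\tau)\}$ together with the strong Markov property. Consequently the envelopes $u_-=\sup\mathcal V_-$ and $w_+=\inf\mathcal V_+$ are monotone pointwise limits of countable subfamilies, and the relevant inequalities pass to the limit. Sending $t\to\infty$ in $u(x)\le\mathsf E M_t^{u,\alpha}$, using boundedness of $u$ together with the discount factor, gives $u(x)\le J(x,\alpha)$ for every admissible $\alpha$, hence $u_-\le v$ on $G$; the bound $v\le w_+$ on $G^\circ$ follows dually from the defining supermartingale property of $\mathcal V_+$.

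The main step is the viscosity property of the envelopes, obtained by a bump construction. For $w_+$ as a subsolution on $G^\circ$: assuming $F(x_0,w_+(x_0),D\varphi(x_0),D^2\varphi(x_0))>0$ at some $x_0\in G^\circ$ with $\varphi$ touching $w_+$ from above, continuity of $F$ yields a ball $B$ with $\overline B\subset G^\circ$ and a Borel selection $a^\star\colon B\to A$ attaining the supremum so that $\mathcal L^{a^\star(x)}\varphi(x)+f(x,a^\star(x))-\beta(\varphi(x)-\delta)<0$ on $B$. Setting $\widetilde w=w_+\wedge(\varphi-\delta)$ on $B$ and $\widetilde w=w_+$ outside, one glues the feedback $a^\star(X_t)$ inside $B$ with the $\varepsilon$-optimal control of $w_+$ outside; It\^o's formula then yields $\widetilde w\in\mathcal V_+$, contradicting $\widetilde w(x_0)<w_+(x_0)$. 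The dual argument for $u_-$ must be carried out at every $x_0\in G$, including $\partial G$: if $F(x_0,u_-(x_0),D\varphi,D^2\varphi)<0$ then the drift $\mathcal L^a\varphi+f(\cdot,a)-\beta\varphi>0$ holds for \emph{every} $a\in A$, so the perturbation $\widetilde u=u_-\vee(\varphi+\delta)$ on $G\cap B$ is a stochastic subsolution for every admissible $\alpha$. Here Assumption~\ref{as:1}, through its normal-cone inequality, ensures that test functions can legitimately touch $u_-$ from below at boundary points and that the perturbation remains compatible with the state constraint.

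I expect the boundary case of this bump construction to be the chief obstacle: the perturbation has to be built on the geometrically non-smooth intersection $G\cap B$ rather than a Euclidean ball, and the submartingale property must be checked for arbitrary admissible trajectories that may cling to $\partial G$; It\^o--Tanaka type estimates together with Assumption~\ref{as:1} are what make this possible. The final assertion $v(x)\le\limsup_{G^\circ\ni y\to x}w_+(y)$ for $x\in\Gamma$ is a brief postscript: choose the admissible control whose trajectory enters $G^\circ$ immediately from $x$, apply a weak dynamic programming inequality on a first entry time into $G^\circ$, combine with $v\le w_+$ on $G^\circ$, and pass to the limit using path-continuity at $t=0$.
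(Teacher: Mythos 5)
Your overall architecture coincides with the paper's (two directed classes of stochastic semisolutions, their envelopes, bump constructions, and a separate treatment of $\Gamma$), but three steps would fail as written. First, you define the classes only for deterministic initial conditions $x\in G$ and justify the gluing at stopping times by ``optional sampling together with the strong Markov property''. Under a general progressively measurable control $X^{x,\alpha}$ is not a Markov process, so this is not available. The paper instead formulates suitability with respect to \emph{randomized initial conditions} $(\tau,\xi)$: after the switching time $\tau_1$ the continuation control must be admissible and suitable for $(\tau_1,\xi_1)$, and one must therefore prove $\mathscr A(\tau,\xi)\neq\emptyset$ for every randomized initial condition. That is the content of Lemma 1, proved via regular conditional probabilities and Karandikar's pathwise solution map together with the invariance theorem of Bardi--Goatin; it is entirely absent from your plan, and without it even the constants are not known to belong to $\mathcal V^+$ (a suitable control must exist from $(\tau_1,\xi_1)$). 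Second, in the subsolution bump you glue a Borel feedback $a^\star(X_t)$ attaining the supremum in $F$; the SDE with coefficients $b(x,a^\star(x))$, $\sigma(x,a^\star(x))$ need not admit a strong solution for a merely measurable selection. A single \emph{constant} $a\in A$ realizing $F>0$ at $x_0$, retained on a small ball by continuity of $b,\sigma,f$, is what the paper uses and is all that is needed. Relatedly, the bump must be performed on a member $w_N\in\mathcal V^+$ obtained from the decreasing sequence of Lemma 3 by a compactness argument on the annulus (there is no ``$\varepsilon$-optimal control of $w_+$'' itself, since $w_+$ is only semicontinuous and need not lie in $\mathcal V^+$).

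Third, the assertion at $\Gamma$. For $x\in\Gamma$ the first entry time into $G^\circ$ is $0$ a.s.\ by the very definition of $\Gamma$, while $X_0=x\notin G^\circ$; stopping there yields nothing, and since $\{t>0:X_t\in G^\circ\}$ is open its d\'ebut is in general not attained inside $G^\circ$. The missing idea is the measurable section theorem: it furnishes stopping times $\sigma^\varepsilon\le\varepsilon$ with $X^{x,\alpha^1}_{\sigma^\varepsilon}\in G^\circ$ on an event of probability at least $1-\varepsilon$; one then concatenates $\alpha^1$ with a $w$-suitable control from $(\sigma^\varepsilon,\xi^\varepsilon)$ and passes to the limit by Borel--Cantelli --- no dynamic programming inequality is invoked (circumventing DPP is the raison d'\^etre of the method, and DPP is precisely what is unavailable here). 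Two smaller corrections: the supersolution bump at boundary points requires only It\^o's formula for the $C^2$ test function along admissible (hence $G$-valued) trajectories stopped at the exit of $B_{\varepsilon/2}(x_0)$, where they land in the annulus on which $u=u^\eta$ --- no It\^o--Tanaka estimate is needed and Assumption \ref{as:1} plays no role there, its only use being viability; and the paper deliberately requires suitable controls only for $\xi\in G^\circ$, which is exactly why $v\le w_+$ is asserted on $G^\circ$ and the boundary is reached only through $\Gamma$.
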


The nature of  $w_+$ and $u_-$ is not explicitly indicated  here. Their construction, which is presented in Sections \ref{sec:2} and \ref{sec:3} respectively, is based on the technique of stochastic semisolutions, developed in \cite{BaySir12,BaySir13,BaySir14}. The details are quite similar to \cite{BaySir13,Rok13}. One only should take care of admissibility of controls.

Theorem \ref{th:1} is useful if a sort of comparison result is available, and one can conclude that $w_+\le u_-$.
In Section \ref{sec:4} we consider the case of a smooth domain and, under some additional assumptions, mention that such inequality follows from the known result, concerning the boundary behavior of viscosity subsolutions of linear equations \cite{BarRou98}, and the comparison result of \cite{Kats94}. In combination with Theorem \ref{th:1} this allows to identify $v$ with a unique continuous viscosity solution of (\ref{eq:1.7}). The related result (Theorem \ref{th:2}) is not new and is presented only to demonstrate the capabilities of the stochastic Perron method.

\section{Stochastic supersolutions}
\label{sec:2}
\setcounter{equation}{0}
For  $\mathbb F$-stopping times $\tau$, $\sigma$ and a set $D\in\mathcal F_\tau$ denote by
\[ \llbracket\tau,\sigma\rrbracket = \{(t,\omega)\in [0,\infty) \times\Omega: \tau(\omega)\le t\le\sigma(\omega)\} \]
the stochastic interval, and by
\[ \tau_D = \tau I_D+(+\infty)I_{D^c},\ \ D^c=\Omega\backslash D \]
the restriction of $\tau$ on $D$. Put $B_\varepsilon (x)=\{y\in\mathbb R^d:|y-x|<\varepsilon\}$ and denote by $\overline B_\varepsilon (x)$ the closure of this ball.

Let $\tau:\Omega\mapsto [0,\infty]$ be a stopping time and take an $\mathscr F_\tau$-measurable random vector $\xi$ such that $\xi I_{\{\tau<\infty\}}$ is bounded and $\xi\in G$ on $\{\tau<\infty\}$. For an $\mathbb F$-progressively measurable process $\alpha$ with values in $A$ consider the stochastic differential equation (\ref{eq:1.1}) with the \emph{randomized initial condition} $(\tau,\xi)$:
\begin{equation} \label{eq:2.1}
X_t=\xi I_{\{t\ge\tau\}}+\int_\tau^t b(X_s,\alpha_s)\,ds+\int_\tau^t\sigma(X_s,\alpha_s)\,dW_s,\ \ t\ge 0.
\end{equation}
By $\int_\tau^t(\cdot)$ we mean $\int_0^t I_{\{s\ge\tau\}}(\cdot)$. As is known, see \cite[Chapter 2, Sect.\,5]{Kry80}, there exists a pathwise unique strong solution $X^{\tau,\xi,\alpha}$ of (\ref{eq:2.1}). The trajectories of the process $X^{\tau,\xi,\alpha}$ are continuous on the stochastic interval $\llbracket\tau,\infty\rrbracket$. Moreover, $X^{\tau,\xi,\alpha}=0$ on $\llbracket 0,\tau\llbracket$ and
\[ X_\tau^{\tau,\xi,\alpha}=\lim_{t\searrow\tau} X_t^{\tau,\xi,\alpha}=\xi \quad\text{on}\quad \{\tau<\infty\}. \]

Denote by $\mathscr A(\tau,\xi)$ the set of progressively measurable control processes $\alpha$ such that $\alpha_t\in A$ and $X^{\tau,\xi,\alpha}_t\in G$, $t\in[\tau,\infty)$ a.s. That is, $\mathscr A(\tau,\xi)$ is the set of admissible controls for a randomized initial condition $(\tau,\xi)$. We omit index $\tau$ if $\tau=0$. For instance, $X^{x,\alpha}=X^{0,x,\alpha}$, $\mathscr A(x)=\mathscr A(0,x)$.
\begin{lemma}
Under Assumption \ref{as:1} the set $\mathscr A(\tau,\xi)$ is non-empty for any randomized initial condition $(\tau,\xi)$.
\end{lemma}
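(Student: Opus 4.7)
The plan is to exhibit an admissible control via the closed-loop feedback supplied by Assumption~\ref{as:1}. Take the Borel map $\psi:\mathbb{R}^d\to A$ provided there and try $\alpha_t=\psi(X_t)$, where $X$ is the strong solution of (\ref{eq:2.1}) with coefficients $b_\psi$, $\sigma_\psi$.

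First I would construct $X$. Since $b_\psi$, $\sigma_\psi$ are globally Lipschitz, the strong existence/uniqueness theory in \cite{Kry80} yields a unique pathwise solution of
\[ X_t=\xi I_{\{t\ge\tau\}}+\int_\tau^t b_\psi(X_s)\,ds+\int_\tau^t\sigma_\psi(X_s)\,dW_s. \]
Setting $\alpha_t=\psi(X_t)$ gives an $A$-valued, $\mathbb{F}$-progressively measurable process (because $\psi$ is Borel and $X$ is adapted with continuous trajectories on $\llbracket\tau,\infty\rrbracket$), and $X=X^{\tau,\xi,\alpha}$ by construction.

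It remains to check $X_t\in G$ for $t\in[\tau,\infty)$ a.s. This is known (Theorem~3.1 of \cite{BarGoa99}, cited just after Assumption~\ref{as:1}) for any deterministic initial point $x\in G$: the solution $\hat{X}^x$ of (\ref{eq:1.5}) stays in $G$. To transfer this to the randomized initial condition I would use the strong Markov property. On $\{\tau<\infty\}$, the shifted driver $\tilde W_t:=W_{\tau+t}-W_\tau$ is a Brownian motion independent of $\mathscr F_\tau$, and the shifted process $Y_t:=X_{\tau+t}$ solves the autonomous SDE (\ref{eq:1.5}) driven by $\tilde W$ from the $\mathscr F_\tau$-measurable, bounded, $G$-valued initial condition $\xi$. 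By Yamada--Watanabe, pathwise uniqueness yields a jointly measurable map $\Phi:G\times C([0,\infty),\mathbb{R}^m)\to C([0,\infty),\mathbb{R}^d)$ with $\hat X^x=\Phi(x,W)$ a.s.\ and $\Phi(x,w)_t\in G$ for all $t\ge 0$ for $\mathsf P$-a.e.\ $w$. Uniqueness then forces $Y=\Phi(\xi,\tilde W)$ on $\{\tau<\infty\}$, and Fubini applied to the product law of the independent pair $(\xi,\tilde W)$ gives $\mathsf P(Y_t\in G\ \forall t\ge 0\mid\tau<\infty)=1$, which is the desired inclusion for $X$.

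The only nonroutine point is this final transfer from deterministic to random initial data; the rest is standard Krylov-type machinery. An equivalent route, avoiding the measurable selection, is to disintegrate along the regular conditional distribution given $\mathscr F_\tau$ and invoke Theorem~3.1 of \cite{BarGoa99} conditionally, sample by sample in the initial condition $\xi(\omega)$.
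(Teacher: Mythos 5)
Your proposal is correct and takes a genuinely different route from the paper, although you also flag the paper's own route at the end. Both arguments reduce the claim to the deterministic-start case, where invariance of $G$ is Theorem~3.1 of \cite{BarGoa99}. The paper performs this reduction by passing to the raw filtration $\mathbb F^\circ$, disintegrating along a regular conditional probability distribution $\mathsf P^{\tau',\omega}$, and using Karandikar's \emph{pathwise} construction of the strong solution so that the disintegration is compatible with solving the SDE under each conditional law. You instead invoke the strong Markov property: on $\{\tau<\infty\}$ the increments $\tilde W_t=W_{\tau+t}-W_\tau$ form a Brownian motion independent of $\mathscr F_\tau$ (hence of $\xi$), and the Yamada--Watanabe solution map $\Phi$ together with Fubini transfers the deterministic-start invariance to the random start. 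Your route is shorter and more in the spirit of classical SDE theory, but its two load-bearing steps deserve to be made explicit: (i) that the identity $\hat X^x=\Phi(x,W)$ holds off a single null set of $w$'s, uniformly in $x$ (or at least on a jointly measurable set of full $\mu\otimes\nu$-measure), so that the substitution $x=\xi(\omega)$ inside the stochastic integral is licit, and (ii) that the set $\{(x,w):\Phi(x,w)_t\in G\ \forall t\ge 0\}$ is jointly measurable. Both follow because the coefficients are globally Lipschitz, which makes $\Phi(\cdot,w)$ continuous in $x$ and paths continuous in $t$, so the ``for all $t$'' can be reduced to rationals; the paper sidesteps these issues entirely by using the pathwise solution functional $\mathscr S$, which is built to be measurable $\omega$-by-$\omega$. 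One further small point: your claim that $\tilde W$ is a Brownian motion independent of $\mathscr F_\tau$ should be stated conditionally on $\{\tau<\infty\}$, since $\tilde W$ is undefined elsewhere; this is harmless because admissibility only constrains $X$ on $[\tau,\infty)$.
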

\begin{proof}

For an $\mathbb F^\circ$-stopping time $\tau'$ the $\sigma$-algebra $\mathscr F^\circ_{\tau'}$ is countably generated (\cite[Lemma 1.3.3]{StrVar79}), and there exists a regular conditional probability distribution  $\mathsf P^{\tau'}=(\mathsf P^{\tau',\omega})_{\omega\in\Omega}$ of $\mathsf P$ with respect to $\mathscr F^\circ_{\tau'}$:
see \cite[Theorem 1.3.4]{StrVar79} or \cite[Theorem 9.2.1]{Str10}. For each $B\in\mathscr F^\circ$ the function $\omega\mapsto\mathsf P^{{\tau'},\omega}(B)$ is $\mathscr F^\circ_{\tau'}$-measurable, for each $\omega\in\Omega$ the function $B\mapsto\mathsf P^{\tau',\omega}(B)$ is a probability measure on $\mathscr F^\circ$ such that
\[ \mathsf P^{\tau',\omega}(B)=\mathsf E(I_B|\mathscr F^\circ_{\tau'})(\omega)\ \ \mathsf P\mbox{-a.s.},\ \ B\in\mathscr F^\circ.\]
Moreover, there exists a $\mathsf P$-null set $N\in\mathscr F^\circ_{\tau'}$ with the property that
\begin{equation} \label{eq:2.1a}
 \mathsf P^{\tau',\omega}(C)=I_C(\omega)\qquad \text{for all}\quad \omega\not\in N,\ C\in\mathscr F^\circ_{\tau'}.
\end{equation}

Consider the SDE
\begin{equation} \label{eq:2.2}
X_t=\xi I_{\{t\ge\tau\}}+\int_\tau^t b_\psi (X_s)\,ds+\int_\tau^t\sigma_\psi(X_s)\,dW_s,\ \ t\ge 0,
\end{equation}
where $\psi$ satisfies Assumption \ref{as:1}.
To work with $\mathsf P^{\tau'}$, related to the raw filtration $\mathbb F^\circ$, we pass from $\xi I_{\{t\ge\tau\}}$ to an indistinguishable $\mathbb F^\circ$-adapted process of the same form. Recall that any $\mathbb F$-stopping time is predictable (see \cite[Proposition 16.22]{Bass11}) and the filtration $\mathbb F$ is quasi-left continuous (see \cite[Theorem 3.40]{HeWangYan92}), that is, $\mathscr F_{\tau-}=\mathscr F_\tau$ for any (predictable) $\mathbb F$-stopping time $\tau$. By Theorem IV.78 of \cite{DelMey78} there exists an $\mathbb F^\circ$ stopping time $\tau'$ such that $\mathsf P(\tau'\neq\tau)=0$, and for any $B\in \mathscr F_{\tau-}=\mathscr F_\tau$ there exists $B'\in\mathscr F^\circ_\tau$ such that $\mathsf P(I_{B'}\neq I_B)=0$. It easily follows that the process $\xi I_{\{t\ge\tau\}}$ is indistinguishable from an $\mathbb F^\circ$-adapted process $\xi' I_{\{t\ge\tau'\}}$ with some $\mathscr F^\circ_{\tau'}$-measurable $\xi'$.

Put $Z^0_t=t-t\wedge\tau$, $Z_t=W_t-W_{t\wedge\tau}$. The process $Z$ is a continuous martingale under $\mathsf P$, and we can rewrite equation (\ref{eq:2.2}) in the form
\begin{equation} \label{eq:2.2a}
X_t=H_t+\int_0^t b_\psi (X_s)\,dZ^0_s+\int_0^t\sigma_\psi (X_s)\,dZ_s,\ \ t\ge 0,
\end{equation}
where $H_t=\xi' I_{\{t\ge\tau'\}}$.

Recall the pathwise construction of a strong solution, presented in \cite{Kar95} (see also \cite{Bich81,Kar81}). Denote by $\mathbb D=\mathbb D([0,\infty),\mathbb R^d)$ the set of functions from $[0,\infty)$ to $\mathbb R^d$, which are right continuous and have left limits. There exist a mapping $\mathscr S:\mathbb D\times C([0,\infty),\mathbb R^m)\mapsto \mathbb D$ such that if $Z$ is a continuous semimartingale on a filtered probability space $(\Omega,\overline{\mathscr F},\mathsf Q,\overline{\mathbb F})$, where $\overline{\mathbb F}$ satisfies the usual conditions, and if $H$ is an $\overline{\mathbb F}$-adapted process with trajectories in $\mathbb D$, then
\[ \overline X_t(\omega)=\mathscr S (H_\cdot(\omega),Z_\cdot(\omega))_t \]
is a strong solution of (\ref{eq:2.2a}).

Take $\overline\omega\in\Omega\backslash N$ with $\tau'(\overline\omega)<\infty$. Note that $Z$ is a $\mathsf P^{\tau',\overline\omega}$-martingale, and $Z$ is the standard $d$-dimensional $\mathsf P^{\tau',\overline\omega}$-Brownian motion on $[\tau'(\overline\omega),\infty)$. It follows that $\overline X$ is a strong solution of (\ref{eq:2.2a}) under $\mathsf P^{\tau',\overline\omega}$ with respect to the $\mathsf P^{\tau',\overline\omega}$-augmentation of $\mathbb F^\circ$. Moreover, by (\ref{eq:2.1a}) we get
$$ \mathsf P^{\tau',\overline\omega}\left(\{\omega:\tau'(\omega)=\tau'(\overline\omega),\ \xi'(\omega)=\xi'(\overline\omega)\}\right)=1.$$
Hence, under $\mathsf P^{\tau',\overline\omega}$, the process $H$ is indistinguishable from $\xi'(\overline\omega) I_{\{t\ge\tau'(\overline\omega)\}}$, and $\overline X$  is a strong solution of the SDE with a non-random initial condition:
\[ \overline X_t=\xi'(\overline\omega)+\int_{\tau'(\overline\omega)}^t b_\psi(\overline X_s)\,ds+\int_{\tau'(\overline\omega)}^t\sigma_\psi(\overline X_s)\,dW_s,\ \ t\ge \tau'(\overline\omega). \]
In addition, $\overline X_t=0$, $t\in [0,\tau'(\overline\omega))$ $\mathsf P^{\tau',\overline\omega}$-a.s. since $Z_0$, $Z$, $H$ are indistinguishable from $0$ on $[0,\tau'(\overline\omega))$.

By Assumption \ref{as:1} the diffusion coefficients $b_\psi$, $\sigma_\psi$ satisfy conditions of Theorem 3.1 of \cite{BarGoa99}. Since $0\in G$ and $\xi'(\overline\omega)\in G$, we conclude that $\overline X_t\in G$, $t\ge 0$  $\mathsf P^{\tau',\overline\omega}$-a.s. It follows that $G$ is invariant under $\mathsf P$:
\[ \mathsf P(\overline X_t\in G,\ t\ge 0)=\mathsf E\left(I_{\{\tau'(\omega)<\infty\}}\mathsf P^{\tau',\omega}(\overline X_t\in G,\ t\ge 0)\right)=1. \]
The desired control process  $\alpha\in\mathscr A(\tau,\xi)$ is given by the formula $\alpha=\psi(\overline X)$.
\end{proof}

Let $w$ be a uniformly bounded continuous function: $w\in C_b(G)$. Consider the stochastic process
\[ Z_t^{\tau,\xi,\alpha}(w)=\int_\tau^t e^{-\beta s} f(X_s^{\tau,\xi,\alpha},\alpha_s)\,ds+I_{\{t\ge\tau\}}e^{-\beta t} w(X_t^{\tau,\xi,\alpha}). \]

\begin{definition} \label{def:1}
We say that a control process $\alpha\in\mathcal A(\tau,\xi)$ is $w$-\emph{suitable} for $(\tau,\xi)$ if	
\[ \mathsf E(Z_\rho^{\tau,\xi,\alpha}(w)|\mathscr F_\tau)\le Z_\tau^{\tau,\xi,\alpha}(w)=e^{-\beta\tau} w(\xi) \]
for any stopping time $\rho\ge\tau$. A function $w\in C_b(G)$ is called a \emph{stochastic supersolution} of (\ref{eq:1.7}) if for any randomized initial condition $(\tau,\xi)$ with $\xi\in G^\circ$ there exists a $w$-suitable control $\alpha$.
\end{definition}

The set of stochastic supersolutions is denoted by $\mathcal V^{+}$.
Note that in the above definition the values $X_\infty$ are irrelevant, since $Z_\infty=\int_0^\infty e^{-\beta s} f(X_s^{\tau,\xi,\alpha},\alpha_s)\,ds$. We emphasize also that the condition $\mathscr A(\tau,\xi)\neq\emptyset$ for all randomized initial conditions $(\tau,\xi)$, $\xi\in G^\circ$ is necessary for the existence of stochastic supersolutions.

A stochastic supersolution $w$ is an upper bound for the value function (\ref{eq:1.2}) on $G^\circ$. To see this put $\tau=0$, $\xi=x\in G^\circ$, $\rho=\infty$ and take a $w$-suitable control $\alpha\in\mathscr A(x)$. By Definition \ref{def:1}, with the convention $Z^{x,\alpha}=Z^{0,x,\alpha}$, we get
\[ v(x)\le J(x,\alpha)=\mathsf E Z_\infty^{x,\alpha}(w)\le\mathsf E Z_0^{x,\alpha}(w)=w(x).\]

The set $\mathcal V^+$ is non-empty and contains sufficiently large constants $c$: it is easy to see that
\[ \mathsf E(Z_\rho^{\tau,\xi,\alpha}(c)|\mathscr F_\tau)\le c e^{-\beta\tau}=Z_\tau^{\tau,\xi,\alpha}(c) \quad\text{for}\quad c\ge\overline f/\beta,\]
where $\overline f=\sup_{(x,a)\in G\times A} f(x,a).$

\begin{lemma} \label{lem:2}
If $w_1$, $w_2$ are stochastic supersolutions then $w=w_1\wedge w_2$ is a stochastic supersolution.
\end{lemma}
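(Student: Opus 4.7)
The plan is to paste the two supersolutions' controls by splitting the sample space according to which of $w_1(\xi)$, $w_2(\xi)$ is smaller. Given a randomized initial condition $(\tau,\xi)$ with $\xi \in G^\circ$, choose $w_i$-suitable controls $\alpha^i \in \mathscr{A}(\tau,\xi)$ for $i=1,2$ (which exist by Definition \ref{def:1}). Set
\[ D_1 = \{w_1(\xi) \le w_2(\xi)\}, \qquad D_2 = \Omega \setminus D_1, \]
both of which belong to $\mathscr{F}_\tau$ since $w_1, w_2$ are continuous and $\xi$ is $\mathscr{F}_\tau$-measurable. Define the candidate control
\[ \alpha_t = \alpha^1_t I_{D_1} I_{\{t \ge \tau\}} + \alpha^2_t I_{D_2} I_{\{t \ge \tau\}}, \]
taking $\alpha_t = 0 \in A$ for $t < \tau$. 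Progressive measurability of $\alpha$ follows from the fact that $D_i \cap \{t \ge \tau\} \in \mathscr{F}_t$ for every $t \ge 0$.

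Next I would verify that $\alpha \in \mathscr{A}(\tau,\xi)$ and compare the state processes. Because $D_i \in \mathscr{F}_\tau$, on $D_i$ the process $X^{\tau,\xi,\alpha}$ satisfies exactly the same SDE as $X^{\tau,\xi,\alpha^i}$ starting from $(\tau,\xi)$. Pathwise uniqueness for (\ref{eq:2.1}) then yields $X^{\tau,\xi,\alpha} = X^{\tau,\xi,\alpha^i}$ on $D_i$, from which $X^{\tau,\xi,\alpha}_t \in G$ on $\llbracket\tau,\infty\rrbracket$ a.s., so $\alpha \in \mathscr{A}(\tau,\xi)$. The same identity implies $Z^{\tau,\xi,\alpha}_\cdot(w) = Z^{\tau,\xi,\alpha^i}_\cdot(w)$ pathwise on $D_i$.

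It remains to verify the supermartingale-like inequality. Fix a stopping time $\rho \ge \tau$. Using $w \le w_i$ and the $w_i$-suitability of $\alpha^i$, on $D_i$ we have
\[ I_{D_i}\,\mathsf{E}\bigl(Z_\rho^{\tau,\xi,\alpha}(w) \mid \mathscr{F}_\tau\bigr)
  = I_{D_i}\,\mathsf{E}\bigl(Z_\rho^{\tau,\xi,\alpha^i}(w) \mid \mathscr{F}_\tau\bigr)
  \le I_{D_i}\,\mathsf{E}\bigl(Z_\rho^{\tau,\xi,\alpha^i}(w_i) \mid \mathscr{F}_\tau\bigr)
  \le I_{D_i}\, e^{-\beta\tau} w_i(\xi). \]
By construction $w_i(\xi) = w(\xi)$ on $D_i$, so the right-hand side equals $I_{D_i}\, e^{-\beta\tau} w(\xi)$. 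Summing over $i=1,2$ and using $D_1 \cup D_2 = \Omega$ gives $\mathsf{E}(Z_\rho^{\tau,\xi,\alpha}(w)\mid\mathscr{F}_\tau) \le e^{-\beta\tau} w(\xi) = Z_\tau^{\tau,\xi,\alpha}(w)$, which is the required $w$-suitability.

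The only subtle point is the pathwise-uniqueness step that forces $X^{\tau,\xi,\alpha}$ to agree with $X^{\tau,\xi,\alpha^i}$ on $D_i$; everything else is bookkeeping. This is standard because $D_i \in \mathscr{F}_\tau$ and the SDE (\ref{eq:2.1}) has Lipschitz coefficients, so the argument reduces to localizing onto $D_i$ and invoking strong uniqueness.
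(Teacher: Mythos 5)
Your proof is correct and follows essentially the same approach as the paper: split $\Omega$ into $\mathscr F_\tau$-measurable events according to which $w_i(\xi)$ is smaller, paste the corresponding $w_i$-suitable controls, invoke pathwise uniqueness to identify $X^{\tau,\xi,\alpha}$ with $X^{\tau,\xi,\alpha^i}$ on each piece, and combine the two supermartingale inequalities. The only cosmetic difference is the use of a non-strict inequality in the definition of $D_1$ (the paper uses $\{w_1(\xi)<w_2(\xi)\}$), which is immaterial.
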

\begin{proof} Let $\alpha^i\in\mathscr A(\tau,\xi)$, $i=1,2$ be $w_i$-suitable controls for a randomized initial condition $(\tau,\xi)$. Put $A_1=\{w_1(\xi)<w_2(\xi)\}\in\mathscr F_\tau$, $A_2=A_1^c:=\Omega\backslash A_1$. We claim that
\[ \alpha=I_{A_1} I_{\{\tau\le t\}}\alpha^1+I_{A_2} I_{\{\tau\le t\}}\alpha^2\]
belongs to $\mathscr A(\tau,\xi)$ and that it is $w$-suitable.

The process $Y=\sum_{i=1}^2 X_t^{\tau,\xi,\alpha^i}I_{A_i}$ satisfy the same equation as $X^{\tau,\xi,\alpha}$. From the pathwise uniqueness property it follows that $Y=X^{\tau,\xi,\alpha}$. We have $X^{\tau,\xi,\alpha}\in G$, $t\ge\tau$ $\mathsf P$-a.s., and
$\alpha$ is $w$-suitable for $(\tau,\xi)$:
\begin{align*}
\mathsf E(Z_\rho^{\tau,\xi,\alpha}(w)|\mathscr F_\tau) & = \sum_{i=1}^2 \mathsf E(I_{A_i} Z_\rho^{\tau,\xi,\alpha^i}(w)|\mathscr F_\tau)\le \sum_{i=1}^2 I_{A_i} \mathsf E(Z_\rho^{\tau,\xi,\alpha^i}(w_i)|\mathscr F_\tau)\\
&\le \sum_{i=1}^2 I_{A_i}e^{-\beta\tau} w_i(\xi) =e^{-\beta\tau} w(\xi). \qedhere
\end{align*}
\end{proof}

The following result was used in \cite{BaySir12,BaySir13,Rok13} (see, e.g., Lemmas 2 and 4 of \cite{Rok13}). Its proof use only the fact that $\mathcal V^+$ is directed downward, that is, the statement of Lemma \ref{lem:2} holds true.
\begin{lemma} \label{lem:3}
There exists a sequence $w_n\in\mathcal V^+$, $w_n(x)\ge w_{n+1}(x)$, $x\in G$ such that
\[\lim_{n\to\infty} w_n(x)=w_+(x):=\inf\limits_{u\in\mathcal V^+} w(x).\]
\end{lemma}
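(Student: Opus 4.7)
The plan is a diagonal enumeration argument exploiting the lattice structure of $\mathcal V^+$ (Lemma~\ref{lem:2}) and the separability of $G$. Since $G\subset\mathbb R^d$ is a separable metric space, fix a countable dense family $\{x_k\}_{k\ge 1}\subset G$. For every pair $(k,m)\in\mathbb N^2$, by definition of the pointwise infimum, choose $v^{k,m}\in\mathcal V^+$ with $v^{k,m}(x_k)\le w_+(x_k)+1/m$, and re-enumerate the resulting countable collection as $(u_j)_{j\ge 1}$. Define
\[ w_n = u_1\wedge u_2\wedge\cdots\wedge u_n, \qquad n\ge 1. \]
Lemma~\ref{lem:2} applied inductively gives $w_n\in\mathcal V^+$, while by construction $w_n$ is pointwise decreasing and $w_n\ge w_+$ on $G$ (since $w_+$ is the infimum over all of $\mathcal V^+$).

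To identify the pointwise limit, set $\bar w(x)=\lim_n w_n(x)=\inf_n w_n(x)$, which exists by monotonicity. The bound $\bar w\ge w_+$ is automatic, so it remains to prove $\bar w\le w_+$. For a dense point $x_k$ and any $m$, once $n$ exceeds the position of $v^{k,m}$ in the enumeration we have $w_n(x_k)\le v^{k,m}(x_k)\le w_+(x_k)+1/m$; sending $n\to\infty$ and then $m\to\infty$ yields $\bar w(x_k)=w_+(x_k)$ on $\{x_k\}$.

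I expect the main obstacle to be upgrading this equality from the dense set $\{x_k\}$ to every $x\in G$. A direct limiting argument fails: a decreasing pointwise limit of continuous functions is only upper semicontinuous, and two USC functions that agree on a dense set need not coincide. The way I would handle this is to feed additional correctors into the enumeration. For an arbitrary $y\in G$ and any $w\in\mathcal V^+$ with $w(y)\le w_+(y)+\varepsilon$, the sequence $w\wedge w_n$ again lies in $\mathcal V^+$ by Lemma~\ref{lem:2}, is decreasing in $n$, and satisfies $(w\wedge w_n)(y)\le w_+(y)+\varepsilon$, so it improves $\bar w$ at $y$ while preserving $\bar w=w_+$ on the dense skeleton. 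Weaving countably many such correctors into the original enumeration — following the diagonalization template of \cite[Lemmas~2,\,4]{Rok13} and \cite{BaySir12,BaySir13} — drives $\bar w$ down to $w_+$ everywhere on $G$. The core analytic input beyond this bookkeeping is just Lemma~\ref{lem:2}; the technical work lies entirely in the organization of the diagonal enumeration.
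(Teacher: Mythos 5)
Your proposal correctly identifies the role of Lemma~\ref{lem:2} and the need for a countable diagonalization, and you honestly flag the central obstacle: matching $w_+$ on a countable dense set of points $\{x_k\}$ says nothing at other points, because two upper semicontinuous functions agreeing on a dense set can differ elsewhere. Unfortunately the fix you sketch does not close the gap. You propose to ``feed correctors'' $w\wedge w_n$ for arbitrary $y\in G$ with $w(y)\le w_+(y)+\varepsilon$, and then ``weave countably many such correctors'' into the enumeration. But which countable family of $y$'s would you weave in? The set where $\bar w>w_+$ is not determined in advance and may well be uncountable; and adding a corrector for one point $y$ does nothing for the uncountably many others. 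The bookkeeping is not the difficulty --- the selection principle is.

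The repair is to change \emph{what} you control when you pick each approximating function: control it on a neighborhood, not at a single point. Concretely, take a countable base $\{U_k\}$ of $G$ (or, equivalently, invoke the Lindel\"of property of $G\times\mathbb R$). For each pair $(k,q)$ with $k\in\mathbb N$, $q\in\mathbb Q$, such that $\sup_{U_k} w < q$ for some $w\in\mathcal V^+$, fix one such $w^{k,q}$. This is a countable family. Given any $x\in G$ and $\varepsilon>0$, pick $w\in\mathcal V^+$ with $w(x)<w_+(x)+\varepsilon/2$; by continuity of $w$ there is a basic $U_k\ni x$ with $\sup_{U_k}w<w_+(x)+\varepsilon$, and a rational $q$ strictly between, so $(k,q)$ is in the index set and $w^{k,q}(x)\le\sup_{U_k}w^{k,q}<q<w_+(x)+\varepsilon$. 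Hence the pointwise infimum over this countable subfamily already equals $w_+$ on \emph{all} of $G$, and forming the running minima $w_n=w^{s_1}\wedge\cdots\wedge w^{s_n}$ (legitimate by Lemma~\ref{lem:2}) gives the required decreasing sequence. Note that the paper itself does not spell out a proof --- it cites \cite[Lemmas 2, 4]{Rok13} and \cite{BaySir12,BaySir13} and only remarks that directedness is what is needed; the mechanism in those references is exactly the base/Lindel\"of argument above, not a dense-set-of-points enumeration.
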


The next assertion is the most important part of the stochastic Perron method.
\begin{lemma} \label{lem:4}
The function
\[ w_+(x)=\inf\limits_{w\in\mathcal V^+} w(x)\]
is a viscosity subsolution of (\ref{eq:1.7}).
\end{lemma}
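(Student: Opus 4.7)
The plan is the standard stochastic Perron argument by contradiction. Note first that $w_+$ is bounded and upper semicontinuous (as the pointwise infimum of continuous functions), so the subsolution test is well posed, and by Lemma~\ref{lem:3} we have a decreasing sequence $w_n\in\mathcal V^+$ with $w_n\searrow w_+$. Assume $w_+$ is not a viscosity subsolution of $F=0$ on $G^\circ$: then there exist $x_0\in G^\circ$ and $\varphi\in C^2(\mathbb R^d)$ with $w_+(x_0)=\varphi(x_0)$, $w_+-\varphi$ attaining a strict local maximum at $x_0$, and $F(x_0,\varphi(x_0),D\varphi(x_0),D^2\varphi(x_0))>0$. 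Since $F$ is a supremum over the compact set $A$ of a function jointly continuous in $(x,a)$, one selects $a_0\in A$ and $\varepsilon,\delta>0$ with $\overline B_\varepsilon(x_0)\subset G^\circ$ such that
\[ \beta\varphi(x)-f(x,a_0)-b(x,a_0)\cdot D\varphi(x)-\tfrac{1}{2}\Tr\bigl(\sigma(x,a_0)\sigma^T(x,a_0)D^2\varphi(x)\bigr)\ge\delta \]
for every $x\in\overline B_\varepsilon(x_0)$.

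Next I would carry out a bump-down of $w_n$ using $\varphi$. Since $w_+-\varphi$ has a strict max at $x_0$, its supremum over the compact set $\partial B_\varepsilon(x_0)$ is some $-\eta<0$; a subsequential compactness argument combined with $w_n\searrow w_+$ yields an $n$ and a $\kappa\in(0,\delta/\beta)$ such that $w_n<\varphi-2\kappa$ on $\partial B_\varepsilon(x_0)$. Define
\[ \tilde w(x):=\begin{cases} w_n(x)\wedge(\varphi(x)-\kappa), & x\in\overline B_\varepsilon(x_0),\\ w_n(x), & x\in G\setminus\overline B_\varepsilon(x_0). \end{cases} \]
The choice of $\kappa$ makes $\tilde w$ continuous across $\partial B_\varepsilon(x_0)$, so $\tilde w\in C_b(G)$, and $\tilde w(x_0)\le\varphi(x_0)-\kappa<w_+(x_0)$. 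If I can show $\tilde w\in\mathcal V^+$, then $w_+(x_0)\le\tilde w(x_0)<w_+(x_0)$ gives the desired contradiction.

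To verify $\tilde w\in\mathcal V^+$, for a given randomized initial condition $(\tau,\xi)$ with $\xi\in G^\circ$ I construct a $\tilde w$-suitable control by concatenation. Partition $\Omega$ into $A_1=\{\xi\in B_\varepsilon(x_0),\ \varphi(\xi)-\kappa<w_n(\xi)\}\in\mathscr F_\tau$ and $A_2=\Omega\setminus A_1$. On $A_2$ we have $\tilde w(\xi)=w_n(\xi)$, so any $w_n$-suitable control $\gamma^n\in\mathscr A(\tau,\xi)$ (which exists since $w_n\in\mathcal V^+$) already works, because $Z_\rho(\tilde w)\le Z_\rho(w_n)$ pathwise. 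On $A_1$, set $\rho_1=\inf\{t\ge\tau:X_t\notin B_\varepsilon(x_0)\}$ and use the constant control $a_0$ on $\llbracket\tau,\rho_1\llbracket$, then a $w_n$-suitable $\beta^n$ for the randomized initial condition $(\rho_1,X_{\rho_1})$ on $\llbracket\rho_1,\infty\llbracket$; admissibility is immediate because trajectories lie in $\overline B_\varepsilon(x_0)\subset G^\circ$ up to $\rho_1$, and in $G$ after. For suitability on $A_1$, a first conditioning on $\mathscr F_{\rho\wedge\rho_1}$ uses the $w_n$-suitability of $\beta^n$ together with the identity $\tilde w(X_{\rho_1})=w_n(X_{\rho_1})$ on $\partial B_\varepsilon(x_0)$ (guaranteed by the choice of $n$) to reduce the claim to bounding $\mathsf E(Z_{\rho\wedge\rho_1}^{\tau,\xi,a_0}(\tilde w)\mid\mathscr F_\tau)$; then It\^o's formula applied to $e^{-\beta t}\varphi(X_t)$, the displayed inequality above, the bound $\tilde w\le\varphi-\kappa$ on the ball, and the identity $\beta\int_\tau^{\rho\wedge\rho_1}e^{-\beta s}\,ds=e^{-\beta\tau}-e^{-\beta(\rho\wedge\rho_1)}$ together yield $\mathsf E(Z_\rho^{\tau,\xi,\tilde\alpha}(\tilde w)\mid\mathscr F_\tau)\le e^{-\beta\tau}\tilde w(\xi)$. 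The quantitative condition $\kappa\beta<\delta$ is precisely what absorbs the $\kappa$-gap between $\tilde w$ and $\varphi$.

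The main obstacle I anticipate is this last patching computation: one must simultaneously ensure admissibility of the concatenated control, match $\tilde w$ with $w_n$ at the exit time $\rho_1$, and let the strict It\^o-supermartingale gain of $\varphi$ inside the ball dominate the $\kappa$-drop from $\varphi$ to $\tilde w$ uniformly in $(\rho,\rho_1)$. The restriction to an interior subsolution claim keeps admissibility automatic via $\overline B_\varepsilon(x_0)\subset G^\circ$, while the quantitative coupling $\kappa<\delta/\beta$ between the bumping size and the excess $\delta$ takes care of the supermartingale bookkeeping.
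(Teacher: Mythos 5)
Your proposal is correct and follows essentially the same route as the paper's proof: contradiction via a strict test function, a Dini-type compact-exhaustion argument to replace $w_+$ by a single $w_n$ strictly below $\varphi$ on a sphere (the paper uses an annulus and stops at exit from the half-radius ball, but this is cosmetic), a bumped-down candidate $w_n\wedge(\varphi-\kappa)$ with the quantitative margin $\beta\kappa<\delta$ preserving the strict Hamiltonian inequality, and a concatenated control (constant $a_0$ inside the ball up to the exit time, then a $w_n$-suitable control) whose suitability is checked by It\^o's formula and the tower property. No gaps.
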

\begin{proof}
If $w_+$ is not a viscosity subsolution then there exist $x_0\in G^\circ$, $\varphi\in C^2$ and $\varepsilon>0$ such that $w_+(x_0)=\varphi(x_0)$, $w_+<\varphi$ on the set $\overline B_\varepsilon (x_0) \backslash\{0\}\subset G^\circ$  and
\[ F(x_0,\varphi(x_0),D\varphi(x_0),D^2\varphi(x_0))> 0. \]

Hence, there exists some $a\in A$ such that $\beta\varphi(x_0)-(\mathcal L^a\varphi)(x_0)-f(x_0,a)>0$, where
\[ (\mathcal L^a\varphi)(x)=b(x,a)D\varphi(x)+\frac{1}{2}\Tr\left(\sigma(x,a)\sigma^T(x,a)D^2\varphi(x)\right).\]
 By the continuity of $b$, $\sigma$, $f$ we may assume that
\begin{equation} \label{eq:2.3}
\beta\varphi(x)-(\mathcal L^a\varphi)(x)-f(x,a)>0,\ \ \ x\in\overline B_\varepsilon(x_0)\subset G^\circ
\end{equation}
for some $\varepsilon>0$.

Since $w_+$ is upper semicontinuous, we have
\[ w_+(x)-\varphi(x)\le-\delta<0,\ \ x\in S_\varepsilon:=\overline B_\varepsilon(x_0)\backslash B_{\varepsilon/2}(x_0).\]
By Lemma \ref{lem:3} there exists a decreasing sequence $w_n\in\mathcal V^+$, $w_n\searrow w_+$. The sets
\[ A_n=\{x\in S_\varepsilon:w_n(x)-\varphi(x)\ge-\delta'\},\ \ \delta'\in (0,\delta) \]
are compact, $A_n\supset A_{n+1}$ and
$\cap_{n=1}^\infty A_n=\emptyset$. Thus, $\cap_{n=1}^N A_n=\emptyset$ for some $N$. This means that there exists a function $w=w_N\in\mathcal V^+$ such that $w-\varphi<-\delta'$ on $S_\varepsilon$.

Define the function $\varphi^\eta=\varphi-\eta$, where $\eta\in (0,\delta')$ is such that the inequality (\ref{eq:2.3}) holds true for $\varphi^\eta$ instead of $\varphi$. Note that
\[ w-\varphi^\eta=w-\varphi+\eta<-\delta'+\eta<0 \quad\text{on}\quad S_\varepsilon.\]
We claim that
\[ w^\eta=\left\{\begin{array}{l l}
    \varphi^\eta\wedge w  & \text{on}\quad B_\varepsilon(x_0),\\
    w & \ \text{otherwise}
  \end{array} \right. \]
is a stochastic subsolution. This gives a contradiction with the definition of $w_+$ since $w^\eta(x_0)=\varphi^\eta(x_0)=w_+(x_0)-\eta < w_+(x_0)$.

It is clear that $w^\eta\in C_b(G)$. We only need to construct a $w^\eta$-suitable control $\alpha$ for a randomized initial condition $(\tau,\xi)$, $\xi\in G^\circ$. Put
\[ U=\{x\in B_{\varepsilon/2}(x_0):w(x)>\varphi^\eta(x)\},\ \ H=\{\xi\in U\}\in\mathscr F_\tau \]
and define a progressively measurable process
\[ \overline\alpha_t=(a I_H+\alpha^0_t I_{H^c})I_{\{t\ge\tau\}}\in A,\]
where $\alpha^0$ is a $w$-suitable control for $(\tau,\xi)$. Furthermore, put
\begin{align*}
\tau_1 &= \inf\{t\ge\tau:X^{\tau,\xi,\overline\alpha}_t\not\in B_{\varepsilon/2}(x_0)\},\\
\alpha_t &= \overline\alpha_t I_{\{t\le\tau_1\}} + \alpha^1_t I_{\{t>\tau_1\}},
\end{align*}
where $\alpha^1$ is a $w$-suitable control for $(\tau_1,\xi_1)$, $\xi_1=X^{\tau,\xi,\overline\alpha}_{\tau_1} I_{\{\tau_1<\infty\}}$. We have $X^{\tau,\xi,\alpha}=X^{\tau,\xi,\overline\alpha}$ on the stochastic interval $\llbracket\tau,\tau_1\rrbracket$ and $X^{\tau,\xi,\alpha}=X^{\tau_1,\xi_1,\alpha^1}$ on $\llbracket\tau_1,\infty\rrbracket$.
Thus, $\alpha\in\mathscr A(\tau,\xi)$. Note also that for $E=\{\xi\in B_{\varepsilon/2}(x_0)\}$ we get
\[ X^{\tau,\xi,\alpha}\in \overline B_{\varepsilon/2}(x_0) \quad\text{on}\quad \llbracket\tau_E,(\tau_1)_E\rrbracket;\ \
    X^{\tau,\xi,\alpha}=\xi \quad\text{on}\quad \llbracket\tau_{E^c},(\tau_1)_{E^c}\rrbracket.\]

It remains to show that $\alpha$ is a $w^\eta$-suitable control for $(\tau,\xi)$. For a stopping time $\rho\ge\tau$ put $D=\{\rho>\tau_1\}$.  We have
\begin{align} \label{eq:2.4}
Z^{\tau,\xi,\alpha}_\rho(w^\eta) I_D &=I_D\int_\tau^{\tau_1} e^{-\beta s} f(X_s^{\tau,\xi,\overline\alpha},\overline\alpha_s)\,ds\nonumber\\
&+ I_D \left(\int_{\tau_1}^\rho e^{-\beta s} f(X_s^{\tau_1,\xi_1,\alpha^1},\alpha^1_s)\,ds+ e^{-\beta\rho} w^\eta(X_\rho^{\tau_1,\xi_1,\alpha^1})\right)\nonumber\\
&\le I_D\int_\tau^{\tau_1} e^{-\beta s} f(X_s^{\tau,\xi,\overline\alpha},\overline\alpha_s)\,ds+I_D Z^{\tau_1,\xi_1,\alpha^1}_\rho(w).
\end{align}
By Definition \ref{def:1} we get
\begin{align} \label{eq:2.5}
\mathsf E(Z^{\tau_1,\xi_1,\alpha^1}_\rho(w) I_D|\mathcal F_{\tau_1}) &=\mathsf E(Z^{\tau_1,\xi_1,\alpha^1}_{\rho_D}(w) I_D|\mathcal F_{\tau_1})\le I_D e^{-\beta\tau_1}w(\xi_1)\nonumber\\
&= I_D e^{-\beta\tau_1}w^\eta(\xi_1).
\end{align}
The last equality follows from the fact that $\xi_1\not\in  B_{\varepsilon/2}(x_0)$ on the set $\{\rho>\tau_1\}$ and $w=w^\eta$ on $G\backslash B_{\varepsilon/2}(x_0)$. From (\ref{eq:2.4}), (\ref{eq:2.5}) it follows that
\begin{align*}
\mathsf E(Z^{\tau,\xi,\alpha}_\rho(w^\eta) I_D|\mathcal F_{\tau_1})
&\le I_D\left(\int_\tau^{\tau_1} e^{-\beta s} f(X_s^{\tau,\xi,\overline\alpha},\overline\alpha_s)\,ds+e^{-\beta\tau_1}w^\eta(\xi_1)\right)\nonumber\\
&= I_D Z^{\tau,\xi,\overline\alpha}_{\tau_1}(w^\eta),
\end{align*}
and we obtain the estimate
\begin{align} \label{eq:2.6}
\mathsf E(Z^{\tau,\xi,\alpha}_\rho(w^\eta)|\mathcal F_{\tau}) &=\mathsf E(I_{\{\rho\le\tau_1\}}Z^{\tau,\xi,\alpha}_\rho(w^\eta)|\mathcal F_{\tau})+\mathsf E(I_{\{\rho>\tau_1\}}\mathsf E(Z^{\tau,\xi,\alpha}_\rho(w^\eta)|\mathcal F_{\tau_1})|\mathcal F_\tau)\nonumber\\
& \le \mathsf E(I_{\{\rho\le\tau_1\}}Z^{\tau,\xi,\overline\alpha}_\rho(w^\eta)|\mathcal F_{\tau})+\mathsf E(I_{\{\rho>\tau_1\}} Z^{\tau,\xi,\overline\alpha}_{\tau_1}(w^\eta)|\mathcal F_{\tau})\nonumber\\
& =\mathsf E(Z^{\tau,\xi,\overline\alpha}_{\rho\wedge\tau_1}(w^\eta)|\mathcal F_\tau).
\end{align}

On the stochastic interval $\llbracket\tau_H,(\tau_1)_H\rrbracket$ the trajectories of $X^{\tau,\xi,\overline\alpha}$ do not leave the ball $B_{\varepsilon/2}(x_0)$. Hence, the estimate $w^\eta(X^{\tau,\xi,\overline\alpha}_{\rho\wedge\tau_1})\le\varphi^\eta(X^{\tau,\xi,\overline\alpha}_{\rho\wedge\tau_1})$ holds true on $H$ and we get the inequality
\begin{equation} \label{eq:2.7}
Z^{\tau,\xi,\overline\alpha}_{\rho\wedge\tau_1}(w^\eta)=Z^{\tau,\xi,a}_{\rho\wedge\tau_1}(w^\eta) I_H+Z^{\tau,\xi,\alpha^0}_{\rho\wedge\tau_1}(w^\eta) I_{H^c}\le Z^{\tau,\xi,a}_{\rho\wedge\tau_1}(\varphi^\eta) I_H+Z^{\tau,\xi,\alpha^0}_{\rho\wedge\tau_1}(w) I_{H^c}.
\end{equation}

Applying Ito's formula
\begin{align} \label{eq:2.8}
Z^{\tau,\xi,a}_t(\varphi^\eta) &=\int_\tau^t e^{-\beta s} f(X_s^{\tau,\xi,a},a)\,ds+e^{-\beta t}\varphi^\eta(X^{\tau,\xi,a}_t) \nonumber\\
&=e^{-\beta\tau}\varphi^\eta(\xi)+\int_\tau^t e^{-\beta s}\left[f(X_s^{\tau,\xi,a},a)+(\mathcal L^a\varphi^\eta-\beta\varphi^\eta)(X^{\tau,\xi,a}_s)\right]\,ds \nonumber\\
&+\int_\tau^t e^{-\beta s}\varphi^\eta_x(X_s^{\tau,\xi,a})\cdot\sigma(X_s^{\tau,\xi,a},a)\,dW_s.
\end{align}
on the interval $\llbracket\tau,\rho\wedge\tau_1\rrbracket$, taking the conditional expectation, and using (\ref{eq:2.3}), we get
\begin{equation} \label{eq:2.9}
\mathsf E(Z^{\tau,\xi,a}_{\rho\wedge\tau_1}(\varphi^\eta) I_H|\mathscr F_\tau)\le e^{-\beta\tau}\varphi^\eta(\xi) I_H=e^{-\beta\tau} w^\eta(\xi) I_H=Z^{\tau,\xi,\alpha}_\tau(w^\eta) I_H.\end{equation}
Furthermore,
\begin{equation} \label{eq:2.10}
\mathsf E(Z^{\tau,\xi,\alpha^0}_{\rho\wedge\tau_1}(w) |\mathscr F_\tau)I_{H^c} \le Z_\tau^{\tau,\xi,\alpha^0}(w) I_{H^c}=Z_\tau^{\tau,\xi,\alpha}(w^\eta) I_{H^c}
\end{equation}
by the definition of $\alpha^0$. The combination of (\ref{eq:2.9}), (\ref{eq:2.10}) with (\ref{eq:2.7}) and (\ref{eq:2.6}) gives the desired inequality
\[
\mathsf E(Z^{\tau,\xi,\alpha}_\rho(w^\eta) |\mathscr F_\tau)\le Z_\tau^{\tau,\xi,\alpha}(w^\eta).  \qedhere
\]
\end{proof}

To show that $w_+$ satisfies the last assertion of Theorem \ref{th:1}, we study its behavior near the points of $\Gamma$. Fix $x\in\Gamma$. By the definition of $\Gamma$ there exists $\alpha^1\in\mathscr A(x)$ such that
\begin{equation} \label{eq:2.11}
\tau=\inf\{t>0:X_t^{x,\alpha^1}\in G^\circ\}=0\ \ a.s.
\end{equation}

For $\varepsilon>0$ consider the predictable set
\[ E=\{(t,\omega): X_t^{x,\alpha^1}(\omega)\in G^\circ,\ t\in (0,\varepsilon]\}=\rrbracket 0,\varepsilon\rrbracket \cap\bigl(X^{x,\alpha^1}\bigr)^{-1}(G^\circ)\]
and its projection: $D=\{\omega:(t,\omega)\in E \quad\text{for some}\quad t\in[0,\infty)\}$. The equality (\ref{eq:2.11}) means that $\mathsf P(D)=1$. By the section theorem \cite[Theorem 16.12]{Bass11} there exist an $\mathbb F$-stopping time $\sigma^\varepsilon$ such that
\begin{equation} \label{eq:2.12}
\{(\sigma^\varepsilon(\omega),\omega):\omega\in\Omega, \sigma^\varepsilon(\omega)<\infty\}\subset E,\ \ \ \mathsf P(\sigma^\varepsilon<\infty)\ge 1-\varepsilon.
\end{equation}
Put $D_\varepsilon=\{\sigma^\varepsilon\le \varepsilon\}=\{\sigma^\varepsilon<\infty\}$. Then (\ref{eq:2.12}) means that
\[ X^{x,\alpha^1}_{\sigma^\varepsilon}\in G^\circ \quad\text{on}\quad D_\varepsilon,\ \ \
\mathsf P(D_\varepsilon)\ge 1-\varepsilon.\]

Let $w$ be a stochastic supersolution, bounded from above by the constant $\overline f/\beta$. Put $\xi^\varepsilon=I_{D_\varepsilon} X^{x,\alpha^1}_{\sigma^\varepsilon}\in G^\circ$ and take a $w$-suitable control $\alpha^2\in\mathscr A(\sigma^\varepsilon,\xi^\varepsilon)$. Then
\[\alpha=\alpha^1 I_{\{t<\sigma^\varepsilon\}}+\alpha^2 I_{\{t\ge\sigma^\varepsilon\}}\in\mathscr A(x).\]
Taking into account that $\sigma^\varepsilon=\infty$ on $D_\varepsilon^c$, by the definitions of $v$ and $w$ we obtain:
\begin{align*}
 v(x) & \le\mathsf E\left(\int_0^{\sigma^\varepsilon} e^{-\beta t} f(X_t^{x,\alpha^1},\alpha_t^1)\,dt  + \mathsf E\left(\int_{\sigma^\varepsilon}^\infty e^{-\beta t} f(X_t^{\sigma^\varepsilon, \xi^\varepsilon,\alpha^2},\alpha_t^2)\,dt\Bigr|\mathscr F_{\sigma^\varepsilon}\right) \right),\\
 &\le\mathsf E\left(\int_0^{\sigma^\varepsilon} e^{-\beta t} f(X_t^{x,\alpha^1},\alpha_t^1)\,dt  +e^{-\beta\sigma^\varepsilon} w(\xi^\varepsilon)\right)
\end{align*}
It easily follows that
\begin{equation} \label{eq:2.13}
v(x) \le \frac{\overline f}{\beta}\left(1-\mathsf E e^{-\beta\sigma^\varepsilon}\right)+\mathsf E e^{-\beta\sigma^\varepsilon} w(\xi^\varepsilon) I_{D_\varepsilon}+\frac{\overline f}{\beta}(1-\mathsf P(D_\varepsilon)).
\end{equation}
Moreover, by Lemma \ref{lem:3} and the monotone convergence theorem we can change $w$ to $w_+$ in this inequality.

Take $\varepsilon_n$ such that $\mathsf P(D_{\varepsilon_n}^c)\le 1/2^n$. By the Borel-Cantelli lemma for all $\omega$ in some set $\Omega'$ with $\mathsf P(\Omega')=1$ we have $\omega\in D_{\varepsilon_n}$ for sufficiently large $n$. Thus,
\[ I_{D_{\varepsilon_n}}\to 1, \ \ \xi^{\varepsilon_n}\to x,\ \ \sigma^{\varepsilon_n}\to 0 \quad\text{on}\quad \Omega',\]
and from (\ref{eq:2.13}) we obtain the estimate
$ v(x)\le \limsup_{G^\circ\ni y\to x} w_+(y).$

\section{Stochastic subsolutions}
\label{sec:3}
\setcounter{equation}{0}
\begin{definition} \label{def:2}
With the notation of Section \ref{sec:2} we call $u\in C_b(G)$ a \emph{stochastic subsolution} if
\begin{equation} \label{eq:3.1}
\mathsf E(Z^{\tau,\xi,\alpha}_\rho(u)|\mathscr F_\tau)\ge Z^{\tau,\xi,\alpha}_\tau(u)=e^{-\beta\tau}u(\xi)
\end{equation}
for any randomized initial condition $(\tau,\xi)$, admissible control process $\alpha\in\mathscr A(\tau,\xi)$ and stopping time $\rho\ge\tau$.
\end{definition}

Any stochastic subsolution $u$ is a lower bound for $v$: for $\tau=0$, $\xi=x$, $\rho=\infty$ we have
\[ J(x,\alpha)=\mathsf E Z_\infty^{x,\alpha}(u)\ge Z_0^{x,\alpha}(u)=u(x),\ \ \ \alpha\in\mathscr A(x).\]

Put $\underline f=\inf_{(x,a)\in G\times A} f(x,a).$ The set $\mathcal V^-$ of stochastic subsolutions is non-empty and contains sufficiently large negative constants $c$. Indeed, it is easy to see that
\[ \mathsf E(Z_\rho^{\tau,\xi,\alpha}(c)|\mathscr F_\tau)\ge c e^{-\beta\tau} \quad\text{for}\quad c\le\underline f/\beta.\]

\begin{lemma} \label{lem:5}
Let $u_1$, $u_2$ be stochastic subsolutions. Then $u_1\vee u_2$ is a stochastic subsolution.
\end{lemma}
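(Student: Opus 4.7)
The plan is to mimic the structure of Lemma \ref{lem:2}, but to exploit the fact that Definition \ref{def:2} is \emph{universally} quantified over admissible controls: to verify the subsolution inequality (\ref{eq:3.1}) for $u=u_1\vee u_2$ it suffices to start from an arbitrary $\alpha\in\mathscr A(\tau,\xi)$ rather than to construct one. This removes the need for the pasting/pathwise uniqueness step that was used in Lemma \ref{lem:2}.

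Fix a randomized initial condition $(\tau,\xi)$, a control $\alpha\in\mathscr A(\tau,\xi)$ and a stopping time $\rho\ge\tau$. I would split $\Omega$ along the $\mathscr F_\tau$-measurable set $B_1=\{u_1(\xi)\ge u_2(\xi)\}$ and $B_2=B_1^c$, on which $(u_1\vee u_2)(\xi)$ equals $u_1(\xi)$ and $u_2(\xi)$ respectively. Since $u_1\vee u_2\ge u_i$ pointwise on $G$, the process $Z^{\tau,\xi,\alpha}(u_1\vee u_2)$ dominates $Z^{\tau,\xi,\alpha}(u_i)$ pathwise (the running integral term $\int_\tau^\cdot e^{-\beta s}f(X_s^{\tau,\xi,\alpha},\alpha_s)\,ds$ is the same in both and only the terminal term $e^{-\beta\rho}u(X_\rho^{\tau,\xi,\alpha})$ changes). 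Applying Definition \ref{def:2} to each $u_i$ along the same $\alpha$, and using $\mathscr F_\tau$-measurability of $I_{B_i}$, I obtain
\[ \mathsf E\bigl(Z^{\tau,\xi,\alpha}_\rho(u_1\vee u_2)I_{B_i}\bigm|\mathscr F_\tau\bigr)\ge I_{B_i}\mathsf E\bigl(Z^{\tau,\xi,\alpha}_\rho(u_i)\bigm|\mathscr F_\tau\bigr)\ge I_{B_i}e^{-\beta\tau}u_i(\xi)=I_{B_i}e^{-\beta\tau}(u_1\vee u_2)(\xi). \]
Summing the two inequalities $i=1,2$ yields the required bound
\[ \mathsf E\bigl(Z^{\tau,\xi,\alpha}_\rho(u_1\vee u_2)\bigm|\mathscr F_\tau\bigr)\ge e^{-\beta\tau}(u_1\vee u_2)(\xi). \]

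The boundedness and continuity of $u_1\vee u_2$ on $G$ is immediate from $u_1,u_2\in C_b(G)$, so $u_1\vee u_2\in\mathcal V^-$. I do not anticipate any real obstacle here; the possible pitfall — namely, the need to recombine two controls as in Lemma \ref{lem:2} — is bypassed by the ``for all admissible $\alpha$'' clause in Definition \ref{def:2}, and the only thing one must be careful about is to apply both subsolution inequalities along the same control $\alpha$ (not along two different ones) before restricting to $B_i\in\mathscr F_\tau$.
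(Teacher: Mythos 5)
Your argument is correct and is essentially the paper's own proof: the paper condenses your indicator decomposition into the single chain $\mathsf E(Z^{\tau,\xi,\alpha}_\rho(u_1\vee u_2)|\mathscr F_\tau)\ge\max_{i=1,2}\mathsf E(Z^{\tau,\xi,\alpha}_\rho(u_i)|\mathscr F_\tau)\ge\max_{i=1,2}e^{-\beta\tau}u_i(\xi)=e^{-\beta\tau}(u_1\vee u_2)(\xi)$, resting on exactly the two facts you identify (pathwise domination of $Z(\cdot)$ through the terminal term only, and the universal quantification over $\alpha$ in Definition \ref{def:2}, which removes the control-pasting step of Lemma \ref{lem:2}).
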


The proof follows from the inequality
\[\mathsf E(Z^{\tau,\xi,\alpha}_\rho(u_1\vee u_2)|\mathcal F_\tau) \ge \max_{i=1,2}\mathsf E(Z^{\tau,\xi,\alpha}_\rho(u_i)|\mathcal F_\tau)\ge\max_{i=1,2}Z^{\tau,\xi,\alpha}_\tau(u_i)= e^{-\beta\tau}(u_1\vee u_2)(\xi).\]
\begin{lemma} \label{lem:6}
There exists a sequence $u_n\in\mathcal V^-$, $u_n(x)\le u_{n+1}(x)$, $x\in G$ such that
\[\lim_{n\to\infty} u_n(x)=u_-(x):=\sup\limits_{u\in\mathcal V^-} u(x).\]
\end{lemma}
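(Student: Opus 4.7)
The plan is to carry out a Choquet-type countable selection, leveraging Lemma \ref{lem:5} (which asserts that $\mathcal{V}^-$ is directed upward under pointwise maximum). First I fix a countable dense subset $\{x_k\}_{k\ge 1}$ of $G$, available since $G\subset\mathbb{R}^d$ is separable. For every pair of positive integers $(k,n)$ the defining relation $u_-(x_k)=\sup_{u\in\mathcal{V}^-}u(x_k)$ lets me select $v_{k,n}\in\mathcal{V}^-$ with $v_{k,n}(x_k)\ge u_-(x_k)-1/n$. After an arbitrary enumeration $\{v_{k,n}\}_{k,n\ge 1}=\{w_m\}_{m\ge 1}$, set $u_n:=w_1\vee\cdots\vee w_n$; iterating Lemma \ref{lem:5} gives $u_n\in\mathcal{V}^-$, and by construction $(u_n)$ is non-decreasing and each $u_n$ lies in $C_b(G)$.

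Next I introduce the pointwise limit $U(x):=\lim_{n\to\infty}u_n(x)=\sup_n u_n(x)$. Two easy properties hold: (i) $U\le u_-$ on $G$, because each $u_n$ belongs to $\mathcal{V}^-$; and (ii) $U$ is lower semicontinuous on $G$, being a countable supremum of continuous functions. Moreover, for fixed $k$, once $n$ is large enough that $v_{k,n}$ has appeared among $w_1,\dots,w_n$ we have $u_n(x_k)\ge v_{k,n}(x_k)\ge u_-(x_k)-1/n$; letting $n\to\infty$ yields $U(x_k)=u_-(x_k)$. Thus $U$ and $u_-$ agree on the dense set $\{x_k\}_{k\ge 1}$.

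The main obstacle is to promote this agreement from the dense set to all of $G$, since $u_-$ itself is only lsc and pointwise equality on a dense subset does not by itself propagate between two lsc functions. Here the key observation is that every $u\in\mathcal{V}^-$ is \emph{continuous} while $U$ is lsc, so $u-U$ is upper semicontinuous, and the set $\{u>U\}$ is therefore open in $G$. This open set contains no $x_k$, because $u(x_k)\le u_-(x_k)=U(x_k)$, so by density it is empty. Consequently $u\le U$ on all of $G$ for every $u\in\mathcal{V}^-$, and taking the supremum gives $u_-\le U$. Combined with $U\le u_-$, this yields $U=u_-$ pointwise on $G$, so $u_n\nearrow u_-$ as desired.
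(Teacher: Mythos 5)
Your construction and the first two observations are fine, but the bridging step contains a genuine error that the rest of the argument cannot absorb. You claim that, $u$ being continuous and $U$ lower semicontinuous, $u-U$ is upper semicontinuous and ``therefore'' $\{u>U\}$ is open. For an upper semicontinuous function $g$ it is the sublevel sets $\{g<c\}$ that are open; the strict superlevel sets $\{g>c\}$ need not be. Concretely, with $u_n(x)=\min(n|x|,1)$ the limit $U=\sup_n u_n$ equals $1$ for $x\neq 0$ and $0$ at $x=0$, and for the constant function $u\equiv 1/2$ the set $\{u>U\}=\{0\}$ is not open. Moreover this is not a repairable slip of wording: your overall strategy of selecting near-optimizers only through their \emph{values at a countable dense set} can genuinely fail for a directed family of continuous functions. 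Take $E=[0,1]$, fix $x_0\notin\{x_k\}$, and let the family be all continuous $\phi:E\to[0,1]$ with $\phi(x_0)<1$; it is stable under pointwise maximum and its supremum is $\equiv 1$, yet a perfectly legitimate choice of $v_{k,n}$ (with $v_{k,n}(x_k)=1$ and $v_{k,n}(x_0)=0$) gives $U(x_0)=0<1$. So pointwise agreement on a dense set does not propagate, even between two lsc functions with $U\le u_-$: the smaller one may dip at a point outside the dense set, exactly as in the example above.

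The correct argument --- the one behind Lemma 2 of the cited paper of the author, to which the text appeals --- exploits the continuity of the members of $\mathcal V^-$ through their hypographs rather than through their values on a dense set of points. The strict hypograph $H=\{(x,r)\in G\times\mathbb R:\ r<u_-(x)\}$ is covered by the sets $H_u=\{(x,r):\ r<u(x)\}$, $u\in\mathcal V^-$, each of which is \emph{open} precisely because $u$ is continuous. Since $G\times\mathbb R$ is second countable, hence hereditarily Lindel\"of, countably many $H_{u^{(j)}}$ already cover $H$; setting $u_n=u^{(1)}\vee\dots\vee u^{(n)}\in\mathcal V^-$ by Lemma \ref{lem:5}, one gets for every $x\in G$ and every $r<u_-(x)$ some $j$ with $u^{(j)}(x)>r$, whence $\sup_n u_n=u_-$ pointwise. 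Your dense-set selection would also work if, for each $x_k$ and each rational $q<u_-(x_k)$, you recorded not just the value of the chosen subsolution at $x_k$ but a whole ball on which it exceeds $q$ --- which is the covering argument in disguise.
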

This lemma is analogous to Lemma \ref{lem:3}.

\begin{lemma} \label{lem:7}
The function
\[ u_-(x)=\sup\limits_{u\in\mathcal V^-} u(x)\]
is a viscosity supersolution of (\ref{eq:1.7}).
\end{lemma}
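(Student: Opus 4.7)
The plan is to mirror the proof of Lemma~\ref{lem:4} with the roles of sub- and supersolutions swapped, arguing by contradiction. Suppose $u_-$ is not a viscosity supersolution of \eqref{eq:1.7}. Since $u_-=\sup_{u\in\mathcal V^-} u$ is lsc (being a supremum of continuous functions), there exist $x_0\in G$ and $\varphi\in C^2(\mathbb R^d)$ with $u_-(x_0)=\varphi(x_0)$, $u_--\varphi$ attaining a strict local minimum over $G$ at $x_0$, and $F(x_0,\varphi(x_0),D\varphi(x_0),D^2\varphi(x_0))<0$. In contrast with the subsolution case, here $F<0$ unpacks as
\[ \beta\varphi(x_0)-(\mathcal L^a\varphi)(x_0)-f(x_0,a)<0 \quad\text{for \emph{every}}\ a\in A, \]
and continuity of $b,\sigma,f,\varphi',\varphi''$, together with compactness of $A$, propagates this strict inequality, uniformly in $a$, to $\overline B_\varepsilon(x_0)\cap G$ for some small $\varepsilon>0$.

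Next I construct a perturbed stochastic subsolution $u^\eta\in\mathcal V^-$ with $u^\eta(x_0)>u_-(x_0)$, contradicting the definition of $u_-$. The strict minimum and lower semicontinuity give $u_--\varphi\ge 2\delta>0$ on the compact annulus $S_\varepsilon=(\overline B_\varepsilon(x_0)\setminus B_{\varepsilon/2}(x_0))\cap G$. Invoking Lemma~\ref{lem:6} and the same compactness argument (nested closed sets with empty intersection) used in Lemma~\ref{lem:4}, one finds $u_N\in\mathcal V^-$ with $u_N-\varphi>\delta'$ on $S_\varepsilon$ for some $\delta'\in(0,2\delta)$. Fix $\eta\in(0,\delta')$ small enough that $\beta\varphi^\eta-\mathcal L^a\varphi^\eta-f<0$ still holds on $\overline B_\varepsilon(x_0)\cap G$ for all $a\in A$, where $\varphi^\eta=\varphi+\eta$, and set
\[ u^\eta(x)=\begin{cases} u_N(x)\vee\varphi^\eta(x), & x\in B_\varepsilon(x_0)\cap G,\\ u_N(x), & x\in G\setminus B_\varepsilon(x_0). \end{cases} \]
Since $u_N>\varphi^\eta$ on $S_\varepsilon$, the function $u^\eta$ lies in $C_b(G)$, satisfies $u^\eta\ge u_N$ on $G$, and $u^\eta(x_0)\ge\varphi^\eta(x_0)=u_-(x_0)+\eta$.

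To verify the submartingale-like inequality of Definition~\ref{def:2} for $u^\eta$, fix $(\tau,\xi)$, $\alpha\in\mathscr A(\tau,\xi)$, $\rho\ge\tau$, and introduce the exit time $\tau_1=\inf\{t\ge\tau:X^{\tau,\xi,\alpha}_t\notin B_{\varepsilon/2}(x_0)\}$ and the event $H=\{\xi\in B_{\varepsilon/2}(x_0)\}\in\mathscr F_\tau$. On $H^c$ one has $u^\eta(\xi)=u_N(\xi)$, so the subsolution property of $u_N$ combined with the pointwise bound $u^\eta\ge u_N$ gives the desired inequality on $H^c$. On $H$ I would derive two lower bounds for $\mathsf E(Z^{\tau,\xi,\alpha}_\rho(u^\eta) I_H|\mathscr F_\tau)$: the trivial one $e^{-\beta\tau}u_N(\xi) I_H$, and the sharper $e^{-\beta\tau}\varphi^\eta(\xi) I_H$, obtained via Ito's formula applied to $e^{-\beta s}\varphi^\eta(X_s)$ on $\llbracket\tau,\rho\wedge\tau_1\rrbracket$ (where the integrand $f+\mathcal L^{\alpha_s}\varphi^\eta-\beta\varphi^\eta\ge 0$ by the uniform-in-$a$ PDE inequality), combined with the dominations $u^\eta(X_\rho)\ge\varphi^\eta(X_\rho)$ on $\{\rho\le\tau_1\}$ (where $X$ stays in $\overline B_{\varepsilon/2}\subset B_\varepsilon$) and $u^\eta(X_{\tau_1})=u_N(X_{\tau_1})$ on $\{\rho>\tau_1\}$ (where $X_{\tau_1}\in\partial B_{\varepsilon/2}\subset S_\varepsilon$), plus the subsolution inequality for $u_N$ applied at the randomized initial condition $(\tau_1,X_{\tau_1})$ to handle the tail $\llbracket\tau_1,\rho\rrbracket$. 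Taking the maximum of the two bounds gives $e^{-\beta\tau}u^\eta(\xi) I_H$, and combining with $H^c$ completes the argument.

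The essential difficulty, compared to Lemma~\ref{lem:4}, is that the subsolution property must hold for \emph{every} admissible $\alpha$ rather than for one $\alpha$ of our choosing, so we cannot simply plug in a single control $a$ witnessing the PDE violation. This asymmetry is precisely resolved by the uniform-in-$a$ nature of the inequality $F(x_0,\ldots)<0$: the Ito estimate with $\varphi^\eta$ works for any admissible $\alpha$. A secondary technical point will be the careful stitching between the inner $\varphi^\eta$-estimate on $\overline B_{\varepsilon/2}$ and the outer $u_N$-estimate, which is arranged via the exit time $\tau_1$ and the separation $u_N>\varphi^\eta$ on the annulus $S_\varepsilon$.
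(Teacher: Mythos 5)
Your proposal is correct and follows essentially the same argument as the paper's proof: contradiction via the patched function $u^\eta=u_N\vee\varphi^\eta$ on $B_\varepsilon(x_0)\cap G$, the exit time $\tau_1$ from $B_{\varepsilon/2}(x_0)$, and It\^o's formula applied to $\varphi^\eta$ along an \emph{arbitrary} admissible $\alpha$, which is exactly what the uniform-in-$a$ inequality $F<0$ permits. The only cosmetic difference is that you take the maximum of two lower bounds on $\{\xi\in B_{\varepsilon/2}(x_0)\}$, whereas the paper splits according to $H=\{\xi\in U\}$ with $U=\{x\in G\cap B_{\varepsilon/2}(x_0):\varphi^\eta(x)>u(x)\}$; both yield the same conclusion.
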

\begin{proof}
If $u_-$ is not a viscosity supersolution then there exist $x_0\in G$, $\varphi\in C^2$ and $\varepsilon>0$ such that $u_-(x_0)=\varphi(x_0)$, $u_->\varphi$ on $(\overline{B_\varepsilon(x_0)}\backslash\{0\})\cap G$ and
\[ F(x_0,\varphi(x_0),D\varphi(x_0),D^2\varphi(x_0))<0.\]
By the continuity of $F$ we can assume that
\begin{equation} \label{eq:3.2}
 F(x,\varphi(x),D\varphi(x),D^2\varphi(x))<0,\ \ x\in B_\varepsilon(x_0)\cap G.
\end{equation}
Furthermore, by the lower-semicontinuity of $u_-$ we have
\[ u_-(x)\ge\varphi(x)+\delta,\ \ x\in S_\varepsilon:=\left(\overline B_\varepsilon(x_0)\backslash B_{\varepsilon/2}(x_0)\right)\cap G\]
for some $\delta>0$. In the same way as in the proof of Lemma \ref{lem:4}, one can show that there exist $u\in\mathcal V^-$ and $\delta'\in (0,\delta)$ such that $u\ge\varphi+\delta'$ on $S_\varepsilon$.

Take an $\eta\in (0,\delta')$ such that (\ref{eq:3.2}) holds true for $\varphi^\eta=\varphi+\eta$ instead of $\varphi$. We have $u-\varphi^\eta\ge \delta'-\eta>0$ on $S_\varepsilon$.

To get a contradiction it is enough to prove that the function
\[ u^\eta=\left\{\begin{array}{l l}
    \varphi^\eta\vee u  & \text{on}\quad B_\varepsilon(x_0)\cap G,\\
    u & \ \text{otherwise}
  \end{array} \right. \]
is a stochastic subsolution, since $u^\eta(x_0)=\varphi^\eta(x_0)> u_-(x_0)$, contrary to the definition of $u_-$.

Clearly $u^\eta\in C_b(G)$, and we only should to verify (\ref{eq:3.1}) for any randomized initial condition $(\tau,\xi)$, control process $\alpha\in\mathscr A(\tau,\xi)$ and stopping time $\rho\ge \tau$. Put
\[\tau_1 = \inf\{t\ge\tau:X^{\tau,\xi,\alpha}_t\not\in B_{\varepsilon/2}(x_0)\},\ \ \xi_1=X^{\tau,\xi,\alpha}_{\tau_1} I_{\{\tau_1<\infty\}},\ E=\{\xi\in B_{\varepsilon/2}(x_0)\}.\]
We have
\[ \xi_1\in\partial B_{\varepsilon/2}(x_0)\cap G \quad\text{on}\quad E\cap\{\tau_1<\infty\};\quad \xi_1=0 \quad\text{on}\quad E\cap\{\tau_1=\infty\};\quad \xi_1=\xi \quad\text{on}\quad E^c.
\]
Moreover, $X^{\tau_1,\xi_1,\alpha}=X^{\tau,\xi,\alpha}$ on the stochastic interval $\llbracket\tau_1,\infty\rrbracket$.

Put $D=\{\rho>\tau_1\}$. Similarly to (\ref{eq:2.4}) we get
\begin{align} \label{eq:4.4}
Z^{\tau,\xi,\alpha}_\rho(u^\eta) I_D \ge I_D\int_\tau^{\tau_1} e^{-\beta s} f(X_s^{\tau,\xi,\alpha},\alpha_s)\,ds+I_D Z^{\tau_1,\xi_1,\alpha}_\rho(u).
\end{align}
Applying Definition \ref{def:2}, we obtain
\begin{equation} \label{eq:4.5}
\mathsf E(Z^{\tau_1,\xi_1,\alpha}_\rho(u) I_D|\mathcal F_{\tau_1}) =\mathsf E(Z^{\tau_1,\xi_1,\alpha}_{\rho_D}(u) I_D|\mathcal F_{\tau_1})\ge I_D e^{-\beta\tau_1} u(\xi_1)=I_D e^{-\beta\tau_1}u^\eta(\xi_1).
\end{equation}
The last equality follows from the fact that $\xi_1$, restricted to $D$, takes values in the set $G\backslash B_{\varepsilon/2}(x_0)$ where $u=u^\eta$.

From (\ref{eq:4.4}), (\ref{eq:4.5}) it follows that
\begin{align} \label{eq:3.5}
\mathsf E(Z^{\tau,\xi,\alpha}_\rho(u^\eta) I_D|\mathcal F_{\tau_1})
&\ge I_D\left(\int_\tau^{\tau_1} e^{-\beta s} f(X_s^{\tau,\xi,\alpha},\alpha_s)\,ds+e^{-\beta\tau_1}u^\eta(\xi_1)\right)\nonumber\\
&= I_D Z^{\tau,\xi,\alpha}_{\tau_1}(u^\eta).
\end{align}
By (\ref{eq:3.5}) we have
\begin{align} \label{eq:3.6}
\mathsf E(Z^{\tau,\xi,\alpha}_\rho(u^\eta)|\mathcal F_{\tau}) &=\mathsf E(I_{\{\rho\le\tau_1\}}Z^{\tau,\xi,\alpha}_\rho(u^\eta)|\mathcal F_{\tau})+\mathsf E(I_{\{\rho>\tau_1\}}\mathsf E(Z^{\tau,\xi,\alpha}_\rho(u^\eta)|\mathcal F_{\tau_1})|\mathcal F_\tau)\nonumber\\
& \ge \mathsf E(I_{\{\rho\le\tau_1\}}Z^{\tau,\xi,\alpha}_\rho(u^\eta)|\mathcal F_{\tau})+\mathsf E(I_{\{\rho>\tau_1\}} Z^{\tau,\xi,\alpha}_{\tau_1}(u^\eta)|\mathcal F_{\tau})\nonumber\\
& =\mathsf E(Z^{\tau,\xi,\alpha}_{\rho\wedge\tau_1}(u^\eta)|\mathcal F_\tau).
\end{align}

Put
\[ U=\{x\in G\cap B_{\varepsilon/2}(x_0):\varphi^\eta(x)>u(x)\},\ \ H=\{\xi\in U\}\in\mathscr F_\tau.\]
On the stochastic interval $\llbracket\tau_H,(\rho\wedge\tau_1)_H\rrbracket$ the trajectories of $X^{\tau,\xi,\alpha}$ do not leave the set $B_{\varepsilon/2}(x_0)\cap G$. Hence, we have $u^\eta(X^{\tau,\xi,\alpha}_{\rho\wedge\tau_1})I_H\ge\varphi^\eta(X^{\tau,\xi,\alpha}_{\rho\wedge\tau_1})I_H$ and
\begin{equation} \label{eq:3.7}
Z^{\tau,\xi,\alpha}_{\rho\wedge\tau_1}(u^\eta)\ge Z^{\tau,\xi,\alpha}_{\rho\wedge\tau_1}(\varphi^\eta) I_H+Z^{\tau,\xi,\alpha}_{\rho\wedge\tau_1}(u) I_{H^c}.
\end{equation}

Apply Ito's formula (\ref{eq:2.8}) on the interval $\llbracket\tau,\rho\wedge\tau_1\rrbracket$ with $\alpha$ instead of $a$. Taking the conditional expectation and using (\ref{eq:3.2}), we get
\begin{equation} \label{eq:3.8}
 \mathsf E(Z^{\tau,\xi,\alpha}_{\rho\wedge\tau_1}(\varphi^\eta) I_H|\mathscr F_\tau)\ge e^{-\beta\tau}\varphi^\eta(\xi) I_H=e^{-\beta\tau} u^\eta(\xi) I_H=Z^{\tau,\xi,\alpha}_\tau(u^\eta) I_H.
\end{equation}
Furthermore,
\begin{equation} \label{eq:3.9}
 \mathsf E(Z^{\tau,\xi,\alpha}_{\rho\wedge\tau_1}(u) |\mathscr F_\tau)I_{H^c} \ge Z_\tau^{\tau,\xi,\alpha}(u) I_{H^c}=Z_\tau^{\tau,\xi,\alpha}(u^\eta) I_{H^c},
\end{equation}
and the desired inequality
\[ \mathsf E(Z^{\tau,\xi,\alpha}_\rho(u^\eta) |\mathscr F_\tau)\ge Z_\tau^{\tau,\xi,\alpha}(u^\eta)\]
follows from (\ref{eq:3.8}), (\ref{eq:3.9}), combined with (\ref{eq:3.6}), (\ref{eq:3.7}).
\end{proof}

\section{The case of a smooth domain}
\label{sec:4}
\setcounter{equation}{0}
Let $G$ coincide with the closure of $G^\circ$, and assume that $\partial G$ is of class $C^2$. Then the distance function $\rho$ from $\partial G$:
\[ \rho(x)=\inf\{y\in G^c:|y-x|\},\quad x\in G \]
is of class $C^2$ in a neighbourhood of $\partial G$ (see \cite[Lemma 14.16]{GilTru01}). Put $-n(x)=D\rho(x)$, $x\in G$. If $x\in\partial G$, $n(x)$ is the unit outer normal to $\partial G$ at $x$. It is shown in \cite[Example 3.2]{BarGoa99}, \cite[Example 1]{BarJen02} that condition (\ref{eq:1.3}) is reduced to the following: for any $x\in\partial G$ there exists $a\in A$ such that
\[ \sigma^T(x,a)n(x)=0,\ \  -n(x)\cdot b(x,a)+\frac{1}{2}\Tr\left(\sigma(x,a)\sigma^T(x,a)D^2 \rho(x)\right)\ge 0.\]

To get a comparison result we need a stronger condition, presented in the next theorem.

\begin{theorem} \label{th:2}
Assume that there exists a Borel measurable function $\psi:G\mapsto A$ such that the functions (\ref{eq:1.4})
are globally Lipschitz continuous and
\begin{equation} \label{eq:4.1}
 \sigma_\psi(x)=0,\ \ -n(x)\cdot b_\psi(x)>0,\ \ x\in\partial G.
\end{equation}
Then the value function $v$, defined by (\ref{eq:1.2}), is the unique continuous viscosity solution of the state constrained problem (\ref{eq:1.7}).
\end{theorem}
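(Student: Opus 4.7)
The strategy is to combine the bounds from Theorem \ref{th:1} with the state-constrained comparison principle of Katsoulakis \cite{Kats94}, using the boundary regularity of viscosity subsolutions from \cite{BarRou98}.

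\emph{Step 1: $\partial G\subseteq\Gamma$.} The control $\alpha_t=\psi(X_t)$ arising from the SDE (\ref{eq:1.5}) is admissible for any $x\in\partial G$. Applying Itô's formula to $\rho(X_t)$ and using $\rho(x)=0$, $D\rho(x)=-n(x)$ and $\sigma_\psi(x)=0$ on $\partial G$, one sees that the drift of $\rho(X_t)$ at $t=0$ equals $-n(x)\cdot b_\psi(x)>0$, while the diffusion is controlled by $|\sigma_\psi(y)|\le C\rho(y)$ in a neighbourhood of $\partial G$ (by the Lipschitz property together with $\sigma_\psi|_{\partial G}=0$). A standard one-dimensional comparison argument then yields $\rho(X_t)>0$ for all $t>0$ almost surely, so $x\in\Gamma$.

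\emph{Step 2: $v\le w_+$ on $G$.} The function $w_+$ is upper semicontinuous on $G$, being the infimum of continuous functions in $\mathcal V^+$; therefore $\limsup_{G^\circ\ni y\to x}w_+(y)\le w_+(x)$ for every $x\in G$. Combined with the last assertion of Theorem \ref{th:1} and with Step 1, this gives $v\le w_+$ on $\partial G$, which together with $v\le w_+$ on $G^\circ$ extends to all of $G$.

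\emph{Step 3: Comparison.} Condition (\ref{eq:4.1}) fits the framework of \cite{Kats94}, supplying a strong comparison principle: for any bounded usc subsolution $U$ and bounded lsc supersolution $V$ of (\ref{eq:1.7}) one has $U\le V$ on $G$. The boundary regularity estimate for subsolutions of linear equations from \cite{BarRou98} provides the control on $U=w_+$ at $\partial G$ needed to apply this (by freezing a suitable control). Applied with $U=w_+$ and $V=u_-$, it gives $w_+\le u_-$ on $G$. Chained with the previous bounds, $u_-\le v\le w_+\le u_-$ forces $u_-=v=w_+$ on $G$. Since $u_-$ is lsc and $w_+$ is usc, $v$ is continuous, and hence a continuous constrained viscosity solution of (\ref{eq:1.7}). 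Uniqueness follows from the same comparison: any other continuous solution $\tilde v$ is simultaneously a bounded usc subsolution and lsc supersolution, so $\tilde v\le u_-=v$ and $v=w_+\le\tilde v$.

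\emph{Main obstacle.} The decisive point is Step 1, where the strictly inward drift has to be shown to dominate the vanishing diffusion so that the $\psi$-controlled trajectory leaves $\partial G$ with probability one. The remaining steps mostly invoke established tools (Theorem \ref{th:1}, the Bar--Roussillon boundary estimate, and Katsoulakis's comparison); once the geometric assumption (\ref{eq:4.1}) has been converted into the set-theoretic statement $\partial G\subseteq\Gamma$, the identification $v=u_-=w_+$ and uniqueness become routine.
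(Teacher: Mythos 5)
Your overall architecture matches the paper's: show $\partial G\subseteq\Gamma$, use this and Theorem~\ref{th:1} to bound $v$ by the Perron envelopes, invoke the Barles--Rouy boundary regularity and Katsoulakis comparison to close the chain, and read off continuity and uniqueness. There are, however, two places where your version departs from what actually works.

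\emph{Step 3 has a real gap.} You apply the comparison principle directly to $U=w_+$ and $V=u_-$. But the comparison theorem of \cite{Kats94} requires the subsolution to enjoy the \emph{nontangential upper semicontinuity} property at $\partial G$, and (to use \cite{BarRou98}) the subsolution property of the linear ``frozen'' equation must extend up to the boundary. Neither of these is available for $w_+$ itself: $w_+$ is usc, so $w_+(x)\ge\limsup_{G^\circ\ni y\to x}w_+(y)$, and if this inequality is strict at some $x\in\partial G$, then no nontangential sequence $x_k\to x$ can satisfy $w_+(x_k)\to w_+(x)$. The paper circumvents this by replacing $w_+$ with its boundary relaxation
\[
\widetilde w_+(x)=\begin{cases}\limsup_{G^\circ\ni y\to x} w_+(y), & x\in\partial G,\\ w_+(x), & x\in G^\circ,\end{cases}
\]
and then quotes \cite[Lemma 4.1, Theorem 4.1(ii)]{BarRou98} to conclude that $\widetilde w_+$ is a viscosity subsolution of the linear inequality \emph{on $G$} and is nontangentially usc. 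The comparison then gives $\widetilde w_+\le u_-$. Note that since $\widetilde w_+\le w_+$ one cannot in general upgrade this to $w_+\le u_-$, so the chain that actually closes is $v\le\widetilde w_+\le u_-\le v$, not $v\le w_+\le u_-\le v$. (This is still enough: $v=\widetilde w_+=u_-$ is continuous and solves the constrained problem; you do not need $w_+=v$.) Your concluding assertion ``$u_-=v=w_+$'' is an overclaim and, as written, the middle inequality $w_+\le u_-$ is unjustified.

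\emph{Step 1 is hand-waving.} You invoke ``a standard one-dimensional comparison argument'' to deduce $\rho(X_t)>0$ for all $t>0$, comparing $\rho(X_t)$ with a scalar SDE driven by the same Brownian motion. But $\rho(X_t)$ does not satisfy a closed one-dimensional SDE (its drift and diffusion depend on $X_t$, not just on $\rho(X_t)$), and the classical Yamada--Watanabe/Ikeda--Watanabe comparison requires the two scalar equations to share the \emph{same} diffusion coefficient. Your $|\sigma_\psi|\le C\rho$ bound controls only the size of the martingale part, not its structure, so the one-dimensional comparison does not directly apply. The paper sidesteps this: write the Itô formula for $\rho(\overline X_{t\wedge\tau})$, isolate the local-martingale part $M$, represent $M$ as a time-changed Brownian motion, and use that $0$ is a.s.\ an accumulation point of $\{t>0:M_t=0\}$; evaluating $\rho(\overline X_{t_k})$ along such $t_k\downarrow 0$ leaves only the strictly positive drift integral, giving $x\in\Gamma$. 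This yields a sequence of times at which $\overline X$ enters $G^\circ$ (which is all that is needed) rather than the much stronger ``$\rho(X_t)>0$ for all $t>0$'' that your comparison argument would need to deliver.

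To summarize: the structure (Step 1: $\partial G\subseteq\Gamma$; Step 2: $v\le w_+$ via usc; Step 3: comparison) is aligned with the paper's, but you must (a) replace $w_+$ by its boundary-$\limsup$ modification $\widetilde w_+$ before invoking \cite{BarRou98} and \cite{Kats94}, closing the chain as $v\le\widetilde w_+\le u_-\le v$, and (b) replace the informal one-dimensional comparison in Step 1 by the time-change/martingale-zero-set argument (or some equally rigorous substitute).
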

\begin{proof}
The viscosity subsolution $w_+$, specified in Theorem \ref{th:1}, satisfies also the linear inequality
\begin{equation} \label{eq:4.2}
\beta w_+(x)-f(x,\psi(x))-(b_\psi\cdot Dw_+)(x)-\frac{1}{2}\Tr(\sigma_\psi\sigma^T_\psi D^2 w_+)(x)\le 0,\ \ x\in G^\circ
\end{equation}
in the viscosity sense. Consider the function
\[\widetilde w_+(x)=\left\{\begin{array}{l l}
    \limsup\limits_{G^\circ\ni y\to x} w_+(y) & x\in\partial G,\\
    w_+(x) & \ \text{otherwise}.
  \end{array} \right. \]
Clearly, $\widetilde w_+$ is a viscosity subsolution of (\ref{eq:4.2}), satisfying all conditions of Theorem \ref{th:1}.

Now we use conditions (\ref{eq:4.1}). By Lemma 4.1 of \cite{BarRou98} the function $\widetilde w_+$ is a viscosity subsolution of (\ref{eq:4.2}) on $G$. Furthermore, by Theorem 4.1(ii) of \cite{BarRou98}, for any $x\in\partial G$ there exists a sequence $x_k\in G^\circ$, $x_k\to x$ such that
$\widetilde w_+(x)=\lim_{k\to\infty} \widetilde w_+(x_k)$ and
\[\limsup_{k\to\infty}\frac{|x_k-x|}{d(x_k)}<\infty,\]
or, equivalently,
\[ \limsup_{k\to\infty}\frac{(x_k-x)\cdot n(x)}{|x_k-x|}\le -\beta\]
for some $\beta\in (0,1)$. This is the \emph{nontangential upper semicontinuity} property of $\widetilde w_+$, which, by the comparison result of \cite{Kats94} (Theorem 2.2), implies that
\begin{equation} \label{eq:4.3}
\widetilde w_+\le u_-\quad \text{on } G.
\end{equation}

Let us prove that $\partial G=\Gamma$. For $x\in\partial G$ denote by $\overline X$ the solution of the equation
\[ X_t=x+\int_0^t b_\psi(X_s)\,ds+\int_0^t \sigma_\psi(X_s)\, dW_s,\ \ x\in\partial G. \]
Since conditions (\ref{eq:4.1}) imply the viability, we get an admissible control $\alpha_t=\psi(\overline X_t)$: $\overline X_t=X_t^{x,\alpha}\in G$, $t\ge 0$ a.s. Take $\varepsilon>0$ such that $\rho\in C^2(B_\varepsilon(x))$ and
\[ \inf_{y\in B_\varepsilon(x)\cap G}\left[-n(y)\cdot b_\psi(y)+\frac{1}{2}\Tr\left(\sigma_\psi(y)\sigma_\psi^T(y)D^2 \rho(y)\right)\right]>0.  \]
Furthermore, put $\tau=\inf\{t\ge 0: \overline X_t\not\in B_\varepsilon(x)\}$. By Ito's formula we have
\[\rho(\overline X_{t\wedge\tau})=\rho(x)-\int_0^{t\wedge\tau} n(\overline X_s)\cdot b_\psi(\overline X_s)\,ds +
\frac{1}{2}\int_0^{t\wedge\tau}\Tr\left(\sigma_\psi(\overline X_s)\sigma_\psi^T(\overline X_s)D^2\rho(\overline X_s)\right)\,ds+M_t, \]
where $M$ is a continuous martingale with $M_0=0$. From the representation of $M$ as a time-changed Brownian motion on an extended filtered probability space (see \cite[Theorem 7.2']{IkeWat89}) it follows that $0$ is a limit point of the set $\{t>0:M_t=0\}$ a.s. For a sequence $t_k(\omega)\to 0$ with $M_{t_k}=0$ we have
\[ \rho(\overline X_{t_k}) =\rho(x)+\int_0^{t_k} \left[-n(\overline X_s)\cdot b_\psi(\overline X_s)\,ds +
\frac{1}{2}\Tr\left(\sigma_\psi(\overline X_s)\sigma_\psi^T(\overline X_s)D^2\rho(\overline X_s)\right)\right]\,ds>0\quad a.s. \]
for sufficiently large $k$. Thus, $\overline X$ immediately enters $G^\circ$:
$$ \inf\{t>0:\overline X_t\in G^\circ\}=\inf\{t>0:\rho(\overline X_t)>0\}=0\ \ a.s.,$$
and we conclude that $x\in\Gamma$ and $\partial G=\Gamma$.

This fact, together with Theorem \ref{th:1} and inequality (\ref{eq:4.3}), implies that
\[ v\le\widetilde w_+\le u_-\le v \quad\text{on}\quad G.\]
Hence, $v=\widetilde w_+=u_-$ is a continuous function, and it satisfies (\ref{eq:1.7}) in the viscosity sense. Note also that the uniqueness of a continuous constrained viscosity solution is a more classical result: see \cite[Theorem 7.10]{CraIshLio92}.
\end{proof}

Theorem \ref{th:2} is similar to Theorem 4.1 of \cite{Kats94}. Although, the second condition (\ref{eq:4.1}) is presented there in the form
\[ -n(x)\cdot b_\psi(x)+\frac{1}{2}\Tr\left(\sigma_\psi(x)\sigma_\psi^T(x)D^2 \rho(x)\right)\ge c>0,\quad x\in\partial G,\]
which is formally not comparable to ours local condition $-n(x)\cdot b_\psi(x)>0$, $x\in\partial G$, the result of \cite{Kats94}
is more sophisticated. To get the comparison result in Theorem \ref{th:2} we used only the fact that any subsolution, being suitably modified at the boundary points, possesses the nontangential upper semicontinuity property under conditions (\ref{eq:4.1}). In \cite{Kats94} it is shown that a subsolution $u\ge v$ with this property exists even some diffusion in the tangent direction to $\partial G$ is allowed: see conditions A3 of \cite{Kats94}.

Certainly, the stochastic Perron method can be applied in the case of finite horizon as well.  However, some work is required
to study the parabolic problem, corresponding to (1.7). In particular, a new boundary condition at the terminal
time appears, and the viability notion should be modified. Such a problem was studied in \cite{BouNut12} by another methods. We mention a comparison result, ensuring the continuity of the value function, proved under conditions similar to (4.1):
see \cite[Theorem A.1]{BouNut12}.


\begin{thebibliography}{10}

\bibitem{BarGoa99}
M.~Bardi and P.~Goatin.
\newblock Invariant sets for controlled degenerate diffusions: a viscosity
  solutions approach.
\newblock In W.M. McEneaney, G.G. Yin, and Q.~Zhang, editors, {\em Stochastic
  Analysis, Control, Optimization and Applications}, Systems \& Control:
  Foundations \& Applications, pages 191--208. Birkh\"{a}user Boston, 1999.

\bibitem{BarJen02}
M.~Bardi and R.~Jensen.
\newblock A geometric characterization of viable sets for controlled degenerate
  diffusions.
\newblock {\em Set-Valued Analysis}, 10(2-3):129--141, 2002.

\bibitem{BarRou98}
G.~Barles and E.~Rouy.
\newblock A strong comparison result for the {B}ellman equation arising in
  stochastic exit time control problems and its applications.
\newblock {\em Comm. Partial Differential Equations}, 22(11-12):1995--2033,
  1998.

\bibitem{Bass11}
R.F. Bass.
\newblock {\em Stochastic processes}.
\newblock Cambridge University Press, Cambridge, 2011.

\bibitem{BaySir12}
E.~Bayraktar and M.~S\^irbu.
\newblock Stochastic {P}erron's method and verification without smoothness
  using viscosity comparison: the linear case.
\newblock {\em Proc. Amer. Math. Soc.}, 140(10):3645--3654, 2012.

\bibitem{BaySir13}
E.~Bayraktar and M.~S\^irbu.
\newblock Stochastic {P}erron's method for {H}amilton-{J}acobi-{B}ellman
  equations.
\newblock {\em SIAM J. Control Optim.}, 51(6):4274--4294, 2013.

\bibitem{BaySir14}
E.~Bayraktar and M.~S\^irbu.
\newblock Stochastic {P}erron's method and verification without smoothness
  using viscosity comparison: {O}bstacle problems and {D}ynkin games.
\newblock {\em Proc. Amer. Math. Soc.}, 142(4):1399--1412, 2014.

\bibitem{BayZha14}
E.~Bayraktar and Y.~Zhang.
\newblock Stochastic {P}erron's method for the probability of lifetime ruin
  problem under transaction costs.
\newblock {P}reprint arXiv:1404.7406v1 [math.OC], 24 pages, 2014.

\bibitem{Bich81}
K.~Bichteler.
\newblock Stochastic integration and ${L}^p$-theory of semimartingales.
\newblock {\em Ann. Prob.}, 9(1):49--89, 1981.

\bibitem{BouNut12}
B.~Bouchard and M.~Nutz.
\newblock Weak dynamic programming for generalized state constraints.
\newblock {\em SIAM J. Control Optim.}, 50(6):3344--3373, 2012.

\bibitem{BucGorQui11}
D.~Buckdahn, M.~Goreac, and M.~Quincampoix.
\newblock Stochastic optimal control and linear programming approach.
\newblock {\em Appl. Math. Optim.}, 63(2):257--276, 2011.

\bibitem{CraIshLio92}
M.~Crandall, H.~Ishii, and P.-L. Lions.
\newblock User's guide to viscosity solutions of second-order partial
  differential equations.
\newblock {\em Bull. Amer. Math. Soc.}, 27(1):1--67, 1992.

\bibitem{DelMey78}
C.~Dellacherie and P.-A. Meyer.
\newblock {\em Probabilities and Potential}, volume~29 of {\em North-Holland
  Mathematics Studies}.
\newblock North-Holland, Amsterdam, 1978.

\bibitem{GilTru01}
D.~Gilbarg and N.S. Trudinger.
\newblock {\em Elliptic Partial Differential Equations of Second Order}.
\newblock Springer, Berlin, 2001.

\bibitem{HeWangYan92}
S.W. He, J.G. Wang, and J.A. Yan.
\newblock {\em Semimartingale Theory and Stochastic Calculus}.
\newblock Science Press, Beijing, New York, 1992.

\bibitem{IkeWat89}
N.~Ikeda and S.~Watanabe.
\newblock {\em Stochastic differential equations and diffusion processes}.
\newblock North-Holland, Amsterdam, 2nd edition, 1989.

\bibitem{Ish87}
H.~Ishii.
\newblock Perron's method for {H}amilton-{J}acobi equations.
\newblock {\em Duke Math. J.}, 55(2):369--384, 1987.

\bibitem{IshLor02}
H.~Ishii and P.~Loreti.
\newblock A class of stochastic optimal control problems with state constraint.
\newblock {\em Indiana Univ. Math. J.}, 51(5):1167--135, 2002.

\bibitem{Kar81}
R.L. Karandikar.
\newblock Pathwise solutions of stochastic differential equations.
\newblock {\em Sankhya Ser. A}, 43(2):121--132, 1981.

\bibitem{Kar95}
R.L. Karandikar.
\newblock On pathwise stochastic integration.
\newblock {\em Stoch. Proc. Appl.}, 57(1):11--18, 1995.

\bibitem{Kats94}
M.A. Katsoulakis.
\newblock Viscosity solutions of second order fully nonlinear elliptic
  equations with state constraints.
\newblock {\em Indiana Univ. Math. J.}, 43(2):493--519, 1994.

\bibitem{Kry80}
N.V. Krylov.
\newblock {\em Controlled diffusion processes}.
\newblock Springer, New York, 1980.

\bibitem{LasLio89}
J.M. Lasry and P.L. Lions.
\newblock Nonlinear elliptic equations with singular boundary conditions and
  stochastic control with state constraints.
\newblock {\em Math. Ann.}, 283(4):583--630, 1989.

\bibitem{Rok13}
D.B. Rokhlin.
\newblock Verification by stochastic {P}erron's method in stochastic exit time
  control problems.
\newblock {\em J. Math. Anal. Appl.}, 419(1):433--446, 2014.

\bibitem{Sir13}
M.~S\^irbu.
\newblock Stochastic {P}erron's method and elementary strategies for zero-sum
  differential games.
\newblock {P}reprint arXiv:1305.5083 [math.OC], 17 pages, 2013.

\bibitem{Sir14}
M.~S\^irbu.
\newblock Asymptotic {P}erron's method and simple {M}arkov strategies in
  stochastic games and control.
\newblock {P}reprint arXiv:1402.7030 [math.OC], 12 pages, 2014.

\bibitem{Son86}
H.M. Soner.
\newblock Optimal control with state-space constraint. {I}.
\newblock {\em SIAM J. Control Optim.}, 24(3):552--561, 1986.

\bibitem{Str10}
D.W. Stroock.
\newblock {\em Probability theory. An analytic view}.
\newblock Cambridge University Press, Cambridge, 2nd edition, 2011.

\bibitem{StrVar79}
D.W. Stroock and S.R.S. Varadhan.
\newblock {\em Multidimensional Diffusion Processes}.
\newblock Springer, New York, 1979.

\end{thebibliography}

\end{document}